\documentclass[11pt]{article}
\usepackage[T1]{fontenc}
\usepackage[margin=1in]{geometry}
\usepackage{amsmath,amssymb,amsthm}
\usepackage{microtype}
\usepackage[hidelinks]{hyperref}
\hypersetup{pdftitle={Deletion thresholds and exponential examples for complete sequences},pdfauthor={Jesse Geneson},pdfsubject={},pdfkeywords={}}
\newtheorem{theorem}{Theorem}
\newtheorem{lemma}[theorem]{Lemma}
\newtheorem{proposition}[theorem]{Proposition}
\newtheorem{corollary}[theorem]{Corollary}

\newcommand{\FS}{\mathop{\rm FS}}
\newcommand{\Z}{\mathbb Z}

\title{Deletion thresholds and exponential examples\\for complete sequences}
\author{Jesse Geneson}
\date{}
\begin{document}
\maketitle

\begin{abstract}
We prove that the pairs of integers $0\le m<n$ for which a nondecreasing
integer sequence can remain complete after every deletion of $m$ terms
and become incomplete after every deletion of $n$ terms are exactly
those with $m\le1$. Here a sequence is complete if every sufficiently
large integer is a finite sum of terms with distinct indices. This answers
Erd\H{o}s Problem~348, posed by Graham and later included in the book
of Erd\H{o}s and Graham.
The proof uses a central-interval theorem: if a complete nondecreasing
positive integer sequence $(a_i)$ has prefix sums $S_j$ with
$S_j-a_{j+1}\to\infty$, then each sufficiently long prefix represents
every integer from any fixed completeness threshold $T$ to $S_j-T$.
We also refute Graham's conjecture, later repeated by Erd\H{o}s and
Graham, that $(\lfloor t\gamma^n\rfloor)_{n\ge1}$ is complete for every
$t>0$ and $1<\gamma<(1+\sqrt5)/2$. We obtain the counterexample by
combining Dubickas's fractional-part theorem with an elementary sign
adjustment. For a common base in this range, we further construct two
such sequences whose interleaving is incomplete and whose coefficient
ratio is not a rational multiple of any integer power of the base.
\end{abstract}

\section{Introduction}

For a sequence $A=(a_i)_{i\ge1}$ of integers, let $\FS(A)$ be the set of
sums of finite selections of its terms, including the empty sum zero.
Each occurrence may be selected at most once. Repeated values are
separate occurrences, and a deletion removes an occurrence. We say that
$A$ is \emph{complete} if $\FS(A)$ contains every sufficiently large
integer. Thus completeness in this paper means eventual completeness.
These are the conventions in Erd\H{o}s and Graham~\cite[p.~54]{EG}.

Graham \cite[Question~3, p.~34]{Graham71} asked which pairs $0\le m<n$
admit a sequence that remains complete after every deletion of $m$
terms and becomes incomplete after every deletion of $n$ terms.
Erd\H{o}s and Graham repeated the question in their 1980 book
\cite[p.~57]{EG}; it is now recorded as Erd\H{o}s Problem~348
\cite{Bloom348}.
The powers of two give the cases $m=0$. The Fibonacci sequence with
two initial ones gives the cases $m=1$ by results of Brown \cite{Brown}
and Graham \cite{GrahamFib64}. We show that these are the only possibilities.

\begin{theorem}\label{thm:classification}
For integers $0\le m<n$, there is a nondecreasing integer sequence that
is complete after every deletion of $m$ occurrences and incomplete after
every deletion of $n$ occurrences if and only if $m\le1$.
\end{theorem}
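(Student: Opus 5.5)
The ``if'' direction is a matter of exhibiting examples. For $m=0$ take $a_i=2^{i-1}$. This sequence is complete. Deleting any single term $2^k$ leaves every integer with binary digit $k$ equal to one unrepresentable, because binary representations are unique. Deleting $n\ge1$ terms therefore always gives an incomplete sequence, since a subsequence of an incomplete sequence is incomplete. For $m=1$ take the Fibonacci sequence $1,1,2,3,5,\dots$ and cite Brown \cite{Brown} and Graham \cite{GrahamFib64}: it stays complete after any single deletion and becomes incomplete after any two deletions. Monotonicity under further deletion then handles every $n\ge2$. The whole content of the theorem is therefore the ``only if'' direction. There I will show that if $A$ stays complete after every deletion of $2$ occurrences, then it stays complete after every finite deletion. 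This rules out every pair with $m\ge2$.

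First I will reduce to positive terms. Finitely many nonpositive terms only shift the representable set by a bounded amount, and infinitely many nonpositive terms are easy to handle separately. Next I aim to prove that $2$-deletion robustness forces $S_j-a_{j+1}\to\infty$. The mechanism is as follows. If $S_j-a_{j+1}\le K$ along a subsequence, then $a_j\to\infty$ along that subsequence, since completeness forces unbounded growth of the prefix sums. After deleting two occurrences, no integer in the window $\bigl(S_j-(\text{deleted sum}),\,a_{j+1}\bigr)$ is representable, and these windows would contain arbitrarily large gaps.

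\emph{This is the step I expect to be the main obstacle.} Completeness after deletion is a statement about a fixed deleted set, while the natural bad windows move with $j$. My first attempt will choose the deleted pair adaptively. I will use one fixed deletion $D=\{a_p,a_q\}$ with $p,q$ large. I will then compare the completeness threshold of $A\setminus D$ with the windows $(S_j-a_p-a_q,\,a_{j+1})$ for $j>q$. The inequality $S_j-a_{j+1}\le K$ together with $a_p+a_q>K+T_0$ makes such windows nonempty for infinitely many $j$, where $T_0$ is the threshold of $A\setminus\{a_p\}$ coming from single-deletion robustness. The point of this choice is that a single fixed $D$ then fails infinitely often.

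With $S_j-a_{j+1}\to\infty$ in hand, I apply the central-interval theorem to the complete sequence $A$ with threshold $T$. Each long prefix represents every integer in $[T,S_j-T]$. Now let $D$ be any finite set of occurrences with total $d$, all of index at most $j_0$. Given a large $N$, pick $j$ with $S_j-d-T\ge N$ and $a_{j+1}$ comparable to $N$, which is possible because $S_j-a_{j+1}\to\infty$ gives slack. I want a representation of $N$ inside $A_{\le j}\setminus D$. For this I use the complement trick: the prefix $A_{\le j}$ represents $S_j-N-\dots$. Alternatively, I run an induction on $|D|$. The inductive hypothesis is that $A\setminus D'$ is complete for $|D'|=|D|-1$, and it satisfies the same divergence hypothesis with $S_j$ replaced by $S_j-d'$. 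Applying the central-interval theorem to $A\setminus D'$ itself, I represent $N-x$ or $N$ while avoiding the extra deleted term $x$. I will swap $x$ for a later term of size at least $x$ that lies in the central interval. The base of the induction is the hypothesis for two deletions. The induction step is thus the one place where the central-interval theorem is applied to a modified sequence, and I expect it to be routine once the divergence step is secured.
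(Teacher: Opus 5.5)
\textbf{Gap: the reduction you aim for is false.} You propose to show that completeness after every two-term deletion implies completeness after every finite deletion. This fails. Take the sequence in which each power of two occurs three times, $1,1,1,2,2,2,4,4,4,\dots$.
\begin{itemize}
\item Any deletion of two occurrences leaves at least one copy of every $2^k$, so every nonnegative integer is still represented by its binary expansion.
\item Deleting the three occurrences of $1$ leaves only even terms, so the result is incomplete.
\end{itemize}
The prefix slack of this sequence tends to infinity, so the central-interval theorem applies to it. Even so, your induction on $|D|$ breaks at $|D|=3$: $A\setminus D'$ is complete for every $|D'|=2$, but no ``swap'' of the third deleted $1$ for a later term can produce an odd sum. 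The central-interval theorem gives no control over avoiding a prescribed extra term, so this step is not routine; it is wrong. The example is consistent with the theorem, since other three-term deletions leave it complete. That is the real point: only an \emph{existential} statement holds. (Your ``if'' direction is fine. Your divergence step also works, though it is easier than you expect: completeness of $A\setminus\{a_i\}$ alone forces $a_{k+1}\le S_k-a_i+1$ for large $k$, so the slack is eventually at least $a_i-1$.)

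\textbf{What is actually needed.} The paper proves an extension statement. If $A$ is complete after every two-term deletion and $A\setminus D$ is complete for a finite $D$, then $D$ can be enlarged by one occurrence while keeping completeness. Iterating from $D=\varnothing$ gives, for each $n$, \emph{some} $n$-term deletion that leaves a complete sequence. This contradicts universal failure at $n$.

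The proof is by contradiction, assuming $B=A\setminus D$ is deletion-minimal.
\begin{itemize}
\item The slack of $B$ diverges, so the central-interval theorem gives $[T,S_k-T]\subseteq P_k$ for $B$ and all large $k$.
\item Fix a late $b_j$ and let $Q$ be the tail subset sums after index $k$ with $b_j$ removed. If $Q$ eventually had all consecutive gaps at most $S_k-2T+1$, the translates $q+[T,S_k-T]$ would cover a tail. Then $B\setminus\{b_j\}$ would be complete, against minimality. So $Q$ has arbitrarily late gaps longer than $S_k-2T+1$.
\item Also delete a prefix term $b_i>2T+C$, where $C$ is the total of $D$. The prefix sums shrink to $[0,S_k-b_i]$, which leaves missing intervals of length at least $b_i-2T+1>C$ arbitrarily far out.
\item Restoring $D$ shifts sums by at most $C$ and cannot fill these intervals. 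Hence $A\setminus\{b_i,b_j\}$ is incomplete, contradicting two-term robustness.
\end{itemize}
This deletion-minimality and long-gap mechanism, and the switch from a universal to an existential conclusion, are the ideas your proposal is missing.
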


Graham \cite{GrahamFib64} also constructed a sequence that remains
complete after every finite deletion but becomes incomplete after every
infinite deletion. Theorem~\ref{thm:classification} addresses the
contrasting case in which every deletion of some fixed finite size
destroys completeness.

For $x\in\mathbb R$, let $\|x\|$ be the distance from $x$ to the nearest
integer. Fan \cite[Corollary~1.2]{Fan} proved that the increasing sequence of
elements of a set $A$ of positive integers remains complete after every
finite deletion if $|A\cap(2^k,2^{k+1}]|\ge5$ for every sufficiently
large integer $k$ and $\sum_{a\in A}\|a\theta\|=\infty$ for every
$\theta\in\mathbb R\setminus\mathbb Z$.

The distinction between representing every positive integer and
representing every sufficiently large integer is essential here.
As recorded in the discussion of Problem~348 \cite{Bloom348},
van Doorn ruled out $m\ge2$ under the former requirement.
Our result allows a finite set of missing integers, with no restriction
on its size or on how the completeness threshold changes after deletion.
In fact, we prove that every finite deletion leaving a complete sequence
can be enlarged by one term whenever the original nondecreasing positive sequence
tolerates every two-term deletion. This is an existence statement for
larger deletions, not a claim that every finite deletion preserves
completeness.

For representation of every positive integer, Brown \cite{Brown} gave
a criterion that compares each term with the sum of its predecessors.
We recall the short interval induction in Section~\ref{sec:examples}.
The main step for eventual completeness is the central-interval theorem
(Theorem~\ref{thm:central}). For a nondecreasing positive integer sequence
$(b_i)$, let $S_N=\sum_{i=1}^N b_i$ and call $S_N-b_{N+1}$ its
\emph{prefix slack}. Fix an integer $T\ge1$, and suppose that every
integer at least $T$ is represented and that the prefix slack tends to
infinity. We prove
that, for every sufficiently large $N$, all integers between $T$ and
$S_N-T$ are represented using only the first $N$ terms. The theorem
retains the original threshold $T$, so the two omitted end intervals
have lengths independent of $N$.
For a sequence with these central intervals that becomes incomplete
after any single deletion, we show that deleting a suitable pair creates gaps longer
than any prescribed bound, arbitrarily far out in the subset sums.
The pair may depend on the prescribed bound. The full proof allows
repeated terms.
For unbounded sequences, the elementary implication from completeness
after every single deletion to divergent prefix slack is also recorded
in the partial SciNet report \cite{SciNet}. We include a proof that
also covers bounded sequences.

We also consider the question of which positive $t,\gamma$ make
$(\lfloor t\gamma^n\rfloor)_{n\ge1}$ complete.
Graham \cite{Graham64} determined the complete sequences in the range
$0<t<1$ and $1<\gamma<2$ and suggested that all $t>0$ and
$1<\gamma<\varphi$ give complete sequences, where
$\varphi=(1+\sqrt5)/2$ \cite[pp.~69--70]{Graham64}.
The proposal appears again in Graham \cite[Question~2, p.~34]{Graham71}
and Erd\H{o}s and Graham \cite[p.~57]{EG}. The parameter classification
is Erd\H{o}s Problem~349 \cite{Bloom349}.
Van Doorn \cite{VD} extended Graham's classification to the range
$\gamma\ge\varphi$ and gave partial results below $\varphi$.
In particular, he proved completeness for
$1.2<\gamma\le1.3$ and $0<t\le5$ by a computer-assisted argument.
Sothanaphan posted further refinements of bounds in the discussion of
Problem~349 \cite{Bloom349}.

Dubickas \cite[p.~332]{D} exhibits a Salem number $\gamma$ for which
a nonzero real coefficient makes all the integer parts even. His
fractional-part theorem \cite[Theorem~6]{D}, which uses earlier work of
Za\"imi \cite{Zaimi} in the Salem case, does not specify the sign of its
coefficient. We apply an elementary sign adjustment, then double the
coefficient and shift the exponents to obtain arbitrarily large positive
coefficients $t$ for which every $\lfloor t\gamma^n\rfloor$ is even,
for $n\ge0$. We check $6/5<\gamma<13/10$ and give the full deduction
in Theorem~\ref{thm:main}.
The example disproves the proposed universal range below $\varphi$:
despite its growth rate, the sequence cannot represent any odd integer.
It does not classify all pairs in Problem~349. Our argument is
existential and does not compute a counterexample coefficient. Van
Doorn's result shows that every such coefficient for this base exceeds 5.

The same construction gives two sequences
$(\lfloor\alpha\gamma^n\rfloor)_{n\ge0}$ and
$(\lfloor\beta\gamma^n\rfloor)_{n\ge0}$ with a common base
$1<\gamma<\varphi$ whose coefficients $\alpha,\beta>0$ satisfy
$\beta/\alpha\ne r\gamma^k$ for all $r\in\mathbb Q$ and
$k\in\mathbb Z$, and whose terms are all even
(Corollary~\ref{cor:two-sequences}). In particular, the coefficient
ratio is irrational, neither sequence is a tail of the other, and their
interleaving is incomplete even with repeated occurrences retained.
This answers negatively the variable-base extension of the two-sequence
question in Graham \cite[Question~12, p.~36]{Graham71} and
Erd\H{o}s and Graham \cite[p.~58]{EG}. It does not resolve the original
base-$2$ question, recorded as Erd\H{o}s Problem~354 \cite{Bloom354}.

In the base-$2$ setting, Hegyv\'ari
\cite{H} proved incompleteness when the larger coefficient is a positive
integral power of 2 times the smaller coefficient and the latter is at
least 2. Hegyv\'ari later constructed pairs for which the subset sums
contain no infinite arithmetic progression \cite{H94}.
Jiang and Ma \cite{JM} studied the smaller-coefficient range $(1,2)$,
and Fang and He \cite{FH} studied excluded sums when the coefficient
ratio is an integral power of 2.
Hegyv\'ari \cite{H24} also proved that, for $1<a<2$, every nonnegative
integer is a sum of two subset sums of
$(\lfloor a2^n\rfloor)_{n\ge0}$. The two selections are independent,
so this statement allows each indexed term to be used twice.
Chen and Fang \cite{CF} and Ma and Chen \cite{MC} also studied
completeness after taking integer parts of positive scalings of integer
sequences.

In Section~\ref{sec:deletion}, we deduce the deletion obstruction from
the central-interval theorem. Section~\ref{sec:examples} verifies the
binary and Fibonacci constructions. Section~\ref{sec:central} proves
the central-interval theorem, including repeated terms.
Sections~\ref{sec:proof} and~\ref{sec:common-base} give the exponential
constructions and their proofs.

\section{The deletion classification}\label{sec:deletion}

We first deduce the exclusion of $m\ge2$ from the central-interval
theorem stated below. We then verify the two known constructions.
The proof of the central-interval theorem is given in
Section~\ref{sec:central}.

For a nondecreasing positive integer sequence $B=(b_i)_{i\ge1}$, define
\[
 S_j=\sum_{i=1}^j b_i,\qquad
 P_j=\FS(b_1,\ldots,b_j),\qquad
 \delta_j=S_j-b_{j+1},
\]
with $S_0=0$ and $P_0=\{0\}$. We call $\delta_j$ the \emph{prefix
slack}. Every $P_j$ is symmetric about $S_j/2$: a selection with sum $x$
has a complementary selection with sum $S_j-x$. Throughout the deletion
arguments, intervals denote sets of integers.

For sets $X,Y\subseteq\Z$ and an integer $c$, we write
\[
 \begin{aligned}
  X+Y &= \{x+y:x\in X,\ y\in Y\},\\
  c+X=X+c &= \{c+x:x\in X\},\\
  c-X &= \{c-x:x\in X\}.
 \end{aligned}
\]
In particular, $c+X$ denotes the translate of $X$ obtained by
adding $c$ to each of its elements.

\begin{theorem}[central-interval]\label{thm:central}
Let $B$ be a nondecreasing sequence of positive integers. Suppose that
$[T,\infty)\subseteq\FS(B)$ for an integer $T\ge1$, and that
$\delta_j\to\infty$. Then
\[
 [T,S_N-T]\subseteq P_N
\]
for every sufficiently large $N$.
\end{theorem}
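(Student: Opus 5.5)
The plan is an interval induction on $N$ in the spirit of Brown's criterion, with the hypothesis $\delta_j\to\infty$ supplying the overlap condition, followed by an argument that some large prefix already has no gaps. For each $N$, let $G_N=[T,S_N-T]\setminus P_N$ be the set of \emph{missing} integers in the central interval. Since $P_N$ is symmetric about $S_N/2$ and the interval $[T,S_N-T]$ is too, $G_N$ is symmetric about $S_N/2$. The goal is to show $G_N=\emptyset$ for all large $N$.

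\emph{Step 1 (propagation).} Fix $N_0$ with $\delta_j\ge 2T-1$ for all $j\ge N_0$. For $N\ge N_0$ we have $P_{N+1}=P_N\cup(b_{N+1}+P_N)$, and
\[
[T,S_{N+1}-T]=[T,S_N-T]\cup[b_{N+1}+T,S_{N+1}-T].
\]
The two pieces overlap or abut because $b_{N+1}+T\le S_N-T+1$, which is exactly $\delta_N\ge 2T-1$. Hence
\[
G_{N+1}\subseteq G_N\cup\bigl(b_{N+1}+G_N\bigr).
\]
So no new gaps are ever created; every missing integer at stage $N+1$ is inherited from stage $N$, possibly translated by $b_{N+1}$. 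In particular, $G_{N_0}=\emptyset$ gives $G_N=\emptyset$ for all $N\ge N_0$. More generally, iterating from a fixed $N_1\ge N_0$ gives $G_N\subseteq G_{N_1}+P'_{N_1,N}$, where $P'_{N_1,N}=\FS(b_{N_1+1},\dots,b_N)$.

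\emph{Step 2 (killing the inherited gaps).} Here completeness is used. Each fixed $y\ge T$ lies in $P_N$ for all large $N$. Consequently each $y$ eventually leaves $G_N$, and once it leaves it never returns, since $P_N$ increases with $N$. I would argue by contradiction, assuming $G_N\ne\emptyset$ for infinitely many $N$. By Step 1 and symmetry, the set of low-half elements of $G_N$ consists of elements $f+s$ with $f\in F:=G_{N_1}$ (a fixed finite set) and $s\in P'_{N_1,N}$.

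Split on the size of $s$. If $s=0$, then $f+s=f$ is fixed and eventually representable, so it drops out. If $s\ne0$, then $s\ge b_{N_1+1}$, and it lies in the interval structure built from the terms $b_{N_1+1},\dots$. Choose $N_1$ so large that the initial segment $[T,M]$ is already contained in $P_{N_1}$ for a large $M$; this is possible because finitely many integers use finitely many indices. Then I will show that a low-half gap $x=f+s$ can be re-expressed as an element of $P_N$. Write $x=(f+s-c)+c$, where $c$ is a term or partial sum among the new indices such that $f+s-c\in[T,M]$. This uses that the slack of the tail is large compared with $f\le S_{N_1}$.

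\emph{Main obstacle.} Step 2 is where I expect the real difficulty: converting the qualitative fact ``$[T,\infty)\subseteq\FS(B)$'' into the quantitative statement that some single prefix has an empty central gap set. The danger is that gaps persist with their minimum drifting to infinity. To rule this out, I would exploit that an inherited low gap $x\le S_N/2$ has the form $f+s$ with $s$ a subset sum of new terms. I would then use $\delta_j\to\infty$ (beyond $2T+\max F$) to exchange one new term in $s$ for old terms filling $[T,M]$. Repeated terms need care in this exchange, since equal values at different indices must be tracked as separate occurrences.
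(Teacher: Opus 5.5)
\textbf{There is a genuine gap: Step 2 does not prove the theorem.}

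Your Step 1 is correct. When $\delta_N\ge 2T-1$ the two central subintervals abut, so $G_{N+1}\subseteq G_N\cup(b_{N+1}+G_N)$. No new gaps appear, and a single gap-free late prefix would suffice. This is the forward form of the paper's observation that subtracting an available term from a missing sum gives a sum missing from the earlier prefix. Together with completeness, however, Step 1 only shows that if gaps persist then $\min G_N\to\infty$. That is exactly the drifting scenario you flag, and Step 2 never excludes it.

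The exchange you describe does not work as stated. Since $[T,M]\subseteq P_{N_1}$ and $G_{N_1}$ is symmetric, every $f\in F$ satisfies $M<f<S_{N_1}-M$. So $f+s-c\in[T,M]$ forces $c>s$ with $c$ in a window of width $M-T$. Nothing guarantees that subset sums of $b_{N_1+1},\dots,b_N$ meet that window. Exchanging a new term for old terms is impossible once the new terms exceed $S_{N_1}$, which happens because $b_k\to\infty$. Concretely, the persistent gaps have the form $x=S_j-u$ with $j>N_1$ and $u$ globally missing. Removing the new terms up to index $j$ leaves $S_{N_1}-u$, which is not in $P_{N_1}$. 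Whether some other choice of new terms leaves a remainder in $[T,M]$ is just a restatement of $x\in P_N$, which is what must be proved.

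\textbf{What is missing is a finiteness mechanism that bounds how long gaps can be inherited.} In the paper it works as follows.
\begin{itemize}
\item Lemma~\ref{lem:cut} places every central gap of $P_N$ at $S_j-u$ with $j<N$ and $0\le u<T$. For large $N$ this forces $u\in H$, the finite set of globally missing integers. Gaps are then grouped into states $(j,N,E)$ with $E\subseteq H$.
\item Lemma~\ref{lem:close} uses a global representation of $a+e$, where $a=b_{j+1}$, to show that the next larger value $b$ satisfies $b-a\le\min E$.
\item Subtracting $a$ (and $b$) from the missing sums and regrouping them under one earlier cut via Lemma~\ref{lem:common} gives a new state whose cut index still tends to infinity.
\item The new state has strictly more labels, or, for a constant block where $(r-1)b_j\le\min E$ forces $r=1$, a strictly larger least label.
\item The potential $\Phi\le(h+1)(q+1)$ therefore strictly increases. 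This contradicts the choice of the largest potential attained at arbitrarily large cuts.
\end{itemize}
Without some such argument tied to the finite set $H$, your reduction to a single gap-free prefix does not close.
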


The threshold in this theorem is the original completeness threshold
$T$. In particular, the two omitted end intervals have bounded length,
independent of $N$.

The unbounded case of the first assertion in the following lemma is
also recorded in the partial SciNet report~\cite{SciNet}. We include
the full proof.

\begin{lemma}\label{lem:slack}
If a nondecreasing positive integer sequence $A=(a_i)_{i\ge1}$ remains complete after
every deletion of one occurrence, then its prefix slack tends to
infinity. The slack still tends to infinity after any fixed finite
deletion.
\end{lemma}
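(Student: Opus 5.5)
The plan is to argue by contradiction for the first assertion, using a single well-chosen deletion. The second assertion then follows by an index shift.

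\textbf{Bounded case.} Since $A$ is nondecreasing, if $A$ is bounded it is eventually constant, say $a_i\le c$ for all $i$. Then $S_j\ge j$ while $a_{j+1}\le c$, so $\delta_j\ge j-c\to\infty$ trivially.

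\textbf{Unbounded case: setup.} Suppose $\delta_j\not\to\infty$. Then there are a constant $C$ and infinitely many indices $j$ with $\delta_j=S_j-a_{j+1}\le C$. Since $A$ is unbounded, we can fix an index $k$ with $a_k\ge C+2$. Delete the single occurrence $a_k$, and call the resulting sequence $A'$. Restrict to the infinitely many bad indices $j$ with $j>k$.

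\textbf{Unbounded case: the missing integers.} For each such $j$, consider the integer
\[
x_j=S_j-a_k+1 .
\]
We show $x_j\notin\FS(A')$ by splitting according to the largest index used.
\begin{itemize}
\item A selection from $A'$ that uses only indices at most $j$ avoids index $k$. Its sum is therefore at most $S_j-a_k<x_j$.
\item A selection that uses some index at least $j+1$ has sum at least $a_{j+1}$, by monotonicity and positivity. From $S_j-a_{j+1}\le C\le a_k-2$ we get $a_{j+1}\ge S_j-a_k+2>x_j$.
\end{itemize}
Repeated values cause no trouble. The deleted occurrence has index $k<j+1$, so every occurrence of index at least $j+1$ is still present and is still at least $a_{j+1}$.

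\textbf{Unbounded case: conclusion.} We have $S_j\to\infty$ while $a_k$ is fixed, so $x_j\to\infty$ along these $j$. Hence $A'$ misses arbitrarily large integers and is incomplete. This contradicts the hypothesis, so $\delta_j\to\infty$.

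\textbf{Second assertion.} Let $D$ be a fixed finite set of deleted indices, with total deleted value $\Sigma_D$. Let $J$ exceed every index in $D$. For $j\ge J$, the prefix of the remaining sequence of length $j-|D|$ consists of $a_1,\dots,a_j$ with the occurrences in $D$ removed. Its sum is $S_j-\Sigma_D$, and the next term of the remaining sequence is $a_{j+1}$. Its prefix slack at that length is therefore $\delta_j-\Sigma_D$, which tends to infinity by the first part. The finitely many shorter prefixes do not affect the limit, and this step does not need the remaining sequence to be complete.

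\textbf{Main obstacle.} The only delicate step is choosing the deleted occurrence so that it exceeds the slack bound $C$. This is exactly where unboundedness is used, which is why the bounded case is handled separately by the trivial linear-growth argument.
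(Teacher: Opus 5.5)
Your proof is correct and follows essentially the same route as the paper. The paper also deletes a single term $a_i$ and uses the same witness $S_k-a_i+1$ to show that the slack is eventually at least $a_i-1$. It phrases this directly rather than by contradiction, and treats the bounded case and the finite-deletion shift just as you do.
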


\begin{proof}
If the sequence is bounded, it is eventually constant and the conclusion
is immediate. Suppose it is unbounded, and fix an index $i$. After
deleting $a_i$, the remaining sequence is complete. For all sufficiently
large $k\ge i$, we therefore have
\[
 a_{k+1}\le \sum_{r=1}^k a_r-a_i+1.
\]
Otherwise the integer on the right would be missing: the retained
first $k$ terms have smaller total, and every later term is larger.
These missing integers would tend to infinity. It follows that the
original slack is at least $a_i-1$ eventually. Since $a_i$ is unbounded,
the slack tends to infinity. A fixed finite deletion subtracts its fixed
total from every sufficiently late prefix slack, which proves the last
assertion.
\end{proof}

We call a complete sequence \emph{deletion-minimal} if deleting any one
of its occurrences makes it incomplete. The next lemma explains why
central intervals are useful. A long gap between tail sums must remain
a long gap after adding a bounded prefix.

\begin{lemma}\label{lem:pair}
Let $B$ be a deletion-minimal nondecreasing positive integer sequence.
Suppose that, for some fixed integer $T\ge1$, we have
$[T,S_k-T]\subseteq P_k$ for every sufficiently large $k$.
For every $C\ge0$, there are two occurrences whose deletion leaves
arbitrarily late intervals of more than $C$ consecutive missing
integers.
\end{lemma}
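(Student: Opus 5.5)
The plan is to localise the missing integers produced by a single deletion, using the hypothesis $[T,S_k-T]\subseteq P_k$. Then a second deletion will widen each such missing integer into a gap of length more than $C$.

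\emph{Step 1 (decomposition).} Fix a late index $k$ and write $R_k=\FS(b_{k+1},b_{k+2},\ldots)$ for the tail sums, so that $\FS(B)=P_k+R_k$. Deleting occurrences among the first $k$ terms only changes the first factor. If $P'_k$ is the subset-sum set of the first $k$ terms with those occurrences removed, then the relation $P_k=P'_k\cup(P'_k+b_i)$, together with the central interval, shows two things. First, $P'_k$ contains a central interval $[T',S_k-b_i-T']$, where $T'$ depends only on $T$ and $b_i$ and not on $k$. Second, outside this interval $P'_k$ coincides with a fixed finite \emph{end pattern} near $0$ and with its reflection near $S_k-b_i$, by the symmetry of subset sums about half the total. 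Lemma~\ref{lem:slack} gives $\delta_k\to\infty$. Hence the central interval is very long compared with $b_{k+1}$, and I will use this to make the structure of $R_k$ near a given large integer explicit.

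\emph{Step 2 (one deletion, localisation).} By deletion-minimality, deleting one occurrence $b_j$ leaves arbitrarily large missing integers $x$. For such an $x$, choose $k$ so that $S_k$ is much larger than all relevant fixed constants but $x$ is still far beyond $S_k$. Then every tail sum $r\in R_k$ satisfies $x-r\notin P'_k$. Because $P'_k$ contains the long central interval, each $r\le x$ must place $x-r$ either in the lower end pattern (so $x-r<T'$) or in the reflected upper end pattern, or else beyond $S_k-b_j$. Consequently, near $x$ the set $\FS(B\setminus b_j)$ is determined by finitely many tail sums $r$ and the two finite end patterns. The number of such configurations is bounded independently of $x$ and $k$.

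\emph{Step 3 (second deletion).} For each integer $d$ with $|d|\le C$, the integer $x+d$ is represented in $\FS(B\setminus b_j)$ only through one of these finitely many configurations: a tail sum $r$ plus an element of an end pattern. I will choose a second occurrence $b_i$ among the early terms whose removal thins both end patterns. The aim is that no element of $[0,T'']$ other than $0$, and symmetrically no element near the top other than the total, survives in a window of width $C$ around the positions that matter. Then $x+d\notin\FS(B\setminus\{b_i,b_j\})$ for all $|d|\le C$. Since $x$ ranges over arbitrarily large values while the configuration data range over a finite set, pigeonhole gives a single pair $(i,j)$ that works for infinitely many $x$. This produces arbitrarily late gaps of more than $C$ consecutive missing integers; the pair may depend on $C$, as the statement allows.

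\emph{Main obstacle.} I expect Step 3 to be the hardest: it is not clear that removing a single further occurrence suffices to kill every near-miss representation of $x+d$. What I will try first is to take $b_i$ large, namely a late term exceeding $C$ plus the fixed end-pattern width, rather than an early one. With $x$ missing for $B\setminus b_j$, the representations of the nearby integers $x+d$ must then use $b_i$ in an essential way. Removing $b_i$ should leave $x+d$ unrepresented because the remaining tail sums differ from those by at least the gap forced by $\delta_k\to\infty$ relative to the end patterns. I would verify this by tracking how $R_k$ changes when one late term is removed.
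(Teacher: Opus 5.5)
There is a genuine gap, and it starts in Step~1. You claim that deleting one early occurrence $b_i$ leaves a central interval $[T',S_k-b_i-T']$ in $P'_k$, with $T'$ independent of $k$. This is false under the lemma's hypotheses. The powers of two are deletion-minimal and satisfy $[1,S_k-1]\subseteq P_k$, yet after removing $b_1=1$ every prefix sum is even, so $P'_k$ contains no interval of length two. Deletion-minimality works against such intervals rather than supplying them.

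You also cannot invoke Lemma~\ref{lem:slack}: it requires completeness after every single deletion, which is the opposite of deletion-minimality. For the powers of two, $\delta_k=-1$ for every $k$. The localisation in Step~2 rests on a central interval for the damaged prefix and on $\delta_k\to\infty$, so it does not go through. Step~3, which you already flag as unresolved, therefore has no foundation. In any case, a single deletion cannot thin the end patterns down to $\{0\}$ when small values repeat.

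The missing idea is to assign the two deletions the other way round, so that the central-interval hypothesis is only ever applied to an \emph{intact} prefix. Your instinct to make $b_i$ large is right, but $b_i$ should be a prefix term, and the deletion that exploits deletion-minimality should be the late one.

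First choose $i$ with $b_i>2T+C$; this is possible because a deletion-minimal sequence is unbounded. Then choose $k\ge i$ with $[T,S_k-T]\subseteq P_k$ and $S_k\ge2T$, and delete a tail occurrence $b_j$ with $j>k$. Let $Q$ be the set of tail sums after removing $b_j$. The set $\FS(B\setminus b_j)=P_k+Q$ is incomplete, so $Q$ has arbitrarily late consecutive elements $q<q'$ with $q'-q>S_k-2T+1$. Otherwise the translates $q+[T,S_k-T]$ would cover all sufficiently large integers.

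Now remove $b_i$ as well. The remaining prefix sums lie in $[0,S_k-b_i]$, and this crude containment is all that is needed. Hence $[q+S_k-b_i+1,q'-1]$ is missing, and its length is at least $b_i-2T+1>C$. No slack divergence, end-pattern analysis, or pigeonhole over configurations is required.
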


\begin{proof}
The sequence $B$ is unbounded. Indeed, a bounded nondecreasing sequence
is eventually constant, and deleting one occurrence sufficiently far
along its constant tail does not change its full subset-sum set.

Choose an index $i$ with $b_i>2T+C$. Choose $k\ge i$ sufficiently large
that $P_k$ contains $[T,S_k-T]$ and $S_k\ge2T$. Fix any $j>k$, and let
$Q$ be the set of subset sums of the terms after index $k$, with the
occurrence at index $j$ deleted. The set $Q$ is an unbounded subset of
the nonnegative integers and contains zero. The subset sums after
deleting only $b_j$ are $P_k+Q$.

Put $L=S_k-2T+1$. There are arbitrarily large consecutive elements
$q<q'$ of $Q$ with $q'-q>L$. To check this, suppose that all sufficiently
late consecutive gaps of $Q$ were at most $L$. The intervals
$[q+T,q+S_k-T]$, as $q$ runs through that tail of $Q$, would then meet
or be adjacent. They would cover all sufficiently large integers, so
$P_k+Q$ would contain every sufficiently large integer. This contradicts
deletion-minimality.

Let $F$ be the set of subset sums of the first $k$ terms with $b_i$
deleted. Then $F\subseteq[0,S_k-b_i]$, and the subset sums after both
deletions are $F+Q$. For consecutive $q<q'$ as above, this set misses
\[
 [q+S_k-b_i+1,q'-1].
\]
Indeed, a sum using a tail subset sum at most $q$ is at most $q+S_k-b_i$,
whereas a sum using a tail subset sum at least $q'$ is at least $q'$.
Since the gaps are integral, the displayed missing interval has length
\[
 (q'-q)-(S_k-b_i)-1\ge b_i-2T+1>C.
\]
The values of $q$ are arbitrarily large, as required.
\end{proof}

\begin{proposition}\label{prop:extension}
Let $A$ be a nondecreasing positive integer sequence that remains
complete after every deletion of two occurrences. If $D$ is any finite
set of occurrences such that $A\setminus D$ is complete, then $D$ can
be enlarged by one occurrence while retaining completeness.
Consequently, for every nonnegative integer $r$, some deletion of $r$
occurrences leaves a complete sequence.
\end{proposition}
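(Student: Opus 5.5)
The plan is to argue by contradiction, using Lemma~\ref{lem:slack}, Theorem~\ref{thm:central} and Lemma~\ref{lem:pair}. Put $B=A\setminus D$. This is a nondecreasing positive integer sequence, and it is complete by hypothesis. Suppose that no single further occurrence can be deleted from $B$ while keeping completeness. Then $B$ is deletion-minimal. We will show that this contradicts the assumption that $A$ survives every deletion of two occurrences.

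The first step is to get central intervals for $B$. Removing one occurrence from $A$ leaves a superset, as a multiset, of some two-term deletion of $A$. Its set of finite sums therefore contains the finite sums of a complete sequence, so $A$ remains complete after every single deletion. By Lemma~\ref{lem:slack}, the prefix slack of $A$ tends to infinity, and the same holds after the fixed finite deletion $D$. Hence $B$ has prefix slack tending to infinity. Let $T\ge1$ be an integer with $[T,\infty)\subseteq\FS(B)$. Theorem~\ref{thm:central} then gives $[T,S_k-T]\subseteq P_k$ for all sufficiently large $k$, where $S_k$ and $P_k$ are taken for $B$. This is exactly the hypothesis of Lemma~\ref{lem:pair}.

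Next, apply Lemma~\ref{lem:pair} with $C=\sigma$, where $\sigma$ is the total of the terms of $D$ ($\sigma=0$ if $D$ is empty). This gives two occurrences $x,y$ of $B$ such that $\FS(B\setminus\{x,y\})$ misses arbitrarily late runs of more than $\sigma$ consecutive integers. These are also occurrences of $A$, and
\[
 \FS(A\setminus\{x,y\})=\FS(B\setminus\{x,y\})+\FS(D),\qquad \FS(D)\subseteq[0,\sigma].
\]
Take a missing run $[u,u+\sigma]$ of $\FS(B\setminus\{x,y\})$. We claim that $u+\sigma$ is not in $\FS(A\setminus\{x,y\})$. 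Write $u+\sigma=f+d$ with $f\in\FS(B\setminus\{x,y\})$ and $d\in[0,\sigma]$. Then $f\in[u,u+\sigma]$, which is impossible. Since such $u$ can be taken arbitrarily large, $A\setminus\{x,y\}$ is incomplete. This contradicts the hypothesis, so some occurrence $z\notin D$ has $A\setminus(D\cup\{z\})$ complete.

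For the final assertion, induct on $r$, starting from $D=\emptyset$. The sequence $A$ itself is complete, because its finite sums contain those of any two-term deletion. Each inductive step applies the enlargement just proved.

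The only delicate point is checking that $B$ meets the hypotheses of Theorem~\ref{thm:central}, namely that its slack diverges. This is where the last assertion of Lemma~\ref{lem:slack} is needed. The rest is the bookkeeping that a gap longer than the total of $D$ survives adding back the bounded set $\FS(D)$.
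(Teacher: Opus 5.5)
Your proof is correct and follows the paper's argument step for step: you get single-deletion completeness from the two-deletion hypothesis, divergent slack for $B=A\setminus D$ via Lemma~\ref{lem:slack}, central intervals via Theorem~\ref{thm:central}, and then apply Lemma~\ref{lem:pair} with $C$ equal to the total of $D$, using the fact that a missing run longer than that total survives adding $\FS(D)$. The differences are cosmetic. You exhibit the single missing integer $u+\sigma$ where the paper exhibits the whole interval $[u+C,u+\ell-1]$. You also check explicitly that $A$ itself is complete for the base case $D=\varnothing$, which the paper leaves implicit.
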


\begin{proof}
The sequence $A$ remains complete after every single deletion: extend
that deletion to two occurrences and then restore the second occurrence.
By Lemma~\ref{lem:slack}, the slack of $B=A\setminus D$ tends to
infinity. Since $B$ is complete and its prefix slack tends to infinity,
Theorem~\ref{thm:central}, proved in Section~\ref{sec:central},
gives a fixed integer $T\ge1$ such that $[T,S_k-T]\subseteq P_k$ for every sufficiently large $k$.

Suppose that no one-occurrence enlargement of $D$ leaves a complete
sequence. Then $B$ is deletion-minimal. Let $C$ be the sum of the
occurrences in $D$. Lemma~\ref{lem:pair} supplies two occurrences
$E$ in $B$ such that $R=\FS(B\setminus E)$ has arbitrarily late
missing intervals of length greater than $C$. Restoring $D$ adds a
subset sum in $\FS(D)\subseteq[0,C]$. If
$[u,u+\ell-1]$ is missing from $R$ and $\ell>C$, then
\[
 [u+C,u+\ell-1]\cap\bigl(R+\FS(D)\bigr)=\varnothing.
\]
For every $x$ in that interval and every $d\in\FS(D)$, the integer
$x-d$ lies in $[u,u+\ell-1]$. Thus $A\setminus E$ is incomplete,
contrary to the hypothesis on pairs. This proves the extension
assertion. Starting with $D=\varnothing$ and applying it $r$ times
proves the last assertion.
\end{proof}

\begin{proof}[Exclusion of $m\ge2$ in Theorem~\ref{thm:classification}]
First suppose all terms are positive. If a sequence remains complete
after every deletion of $m\ge2$ occurrences, it remains complete after
every deletion of two occurrences: extend the pair to $m$ occurrences
and then restore the extra terms. Proposition~\ref{prop:extension}
gives a complete sequence after some deletion of $n$ occurrences,
contradicting the required universal failure after $n$ deletions.

The same conclusion holds for nondecreasing integer sequences. A bounded
nondecreasing complete integer sequence is eventually equal to a
positive constant. Deleting $n$ sufficiently late occurrences of that
constant leaves its full subset-sum set unchanged, contradicting the
required failure after every $n$ deletions.

We may therefore assume that the original sequence $A$ is unbounded.
Since $A$ is nondecreasing, it has only finitely many nonpositive
terms. Form a positive sequence $A^+$ by discarding the zeros and
replacing each negative term by its absolute value. Only finitely
many of these positive occurrences lie below any fixed bound, so
we can arrange them in nondecreasing order.

Identify each occurrence of $A^+$ with the nonzero occurrence of $A$
from which it came. For any finite set $D$ of these occurrences,
let $C_D$ be the sum of the absolute values of the negative terms
remaining in $A\setminus D$. Then
\[
 \FS(A^+\setminus D)=C_D+\FS(A\setminus D).
\]
Indeed, replacing a surviving term $-c$ by $c$ translates its two
possible contributions by $c$, since $\{0,c\}=c+\{0,-c\}$.
Applying this identity to all surviving negative terms gives the
displayed equality; discarding zeros has no effect on subset sums.
Since translation by a fixed integer preserves eventual
completeness, $A^+\setminus D$ is complete if and only if
$A\setminus D$ is complete.

Thus $A^+$ is complete after every deletion of two occurrences and
incomplete after every deletion of $n$ occurrences, contradicting
the positive case.
\end{proof}

\section{The two existence constructions}\label{sec:examples}

We verify the powers-of-two and Fibonacci examples recorded by
Erd\H{o}s and Graham~\cite[p.~57]{EG}. Graham \cite{GrahamFib64}
proved incompleteness after two Fibonacci deletions and cited Brown
\cite{Brown} for completeness after one deletion. We include proofs to
fix the role of the two initial ones and to keep track of repeated
occurrences.

\begin{proof}[Existence for $m=0$]
The sequence $1,2,4,8,\ldots$ represents every nonnegative integer by
binary expansion. After deleting $2^j$, no remaining subset sum is
congruent to $2^j$ modulo $2^{j+1}$: all smaller terms together sum to
$2^j-1$, and every larger term is divisible by $2^{j+1}$. Thus a
one-occurrence deletion leaves infinitely many missing integers.
Further deletions cannot restore a representation, so this sequence
works for every pair $(0,n)$ with $n\ge1$.
\end{proof}

For the second example, let $F_1=F_2=1$ and
$F_{k+2}=F_{k+1}+F_k$. We use the identity
\[
 \sum_{r=1}^k F_r=F_{k+2}-1,
\]
which follows by induction on $k$. We also use the standard interval
criterion of Brown \cite{Brown}, whose short induction we recall:
if a nondecreasing positive sequence starts with one, and each following
term is at most one plus the sum of its predecessors, then each prefix
represents every integer between zero and its total. Indeed, if
$[0,S]$ is represented and the next term is $c\le S+1$, the two
represented intervals $[0,S]$ and $[c,c+S]$ meet or are adjacent.

\begin{proof}[Existence for $m=1$]
Delete any occurrence $F_j$. The remaining sequence still starts with
one. For an original index $s>j$, the sum of the retained terms before
$F_s$ is $F_{s+1}-1-F_j$. The required interval inequality is
\[
 F_s\le F_{s+1}-F_j,
\]
which is equivalent to $F_{s-1}\ge F_j$ and holds because $s-1\ge j$.
Before the deletion the interval inequality is unchanged. The interval
induction therefore shows that every nonnegative integer is represented
after any one-occurrence deletion.

Now delete $F_i,F_j$ with $i<j$. For $t\ge0$ and $1\le d\le F_i$, put
$x_t(d)=F_{j+2t+1}-d$. We prove by induction on $t$ that these integers
are missing. For $t=0$, the number $x_0(d)$ is smaller than the first
term $F_{j+1}$ after the later deletion, whereas the available terms
before that point have total
\[
 F_{j+1}-1-F_i<x_0(d).
\]
This proves the base case. For $t>0$, set $s=j+2t$. A representation of
$x_t(d)<F_{s+1}$ could use no index greater than $s$. If it also omitted
$F_s$, its total would be at most
\[
 F_{s+1}-1-F_i-F_j<x_t(d).
\]
Thus it would have to use $F_s$. Subtracting $F_s$ would represent
$F_{s-1}-d=x_{t-1}(d)$, contrary to the induction hypothesis.
This completes the induction. These missing integers tend to infinity,
so every pair deletion destroys completeness. Any deletion of $n\ge2$
occurrences contains a pair and also destroys completeness. The
Fibonacci sequence therefore works for every pair $(1,n)$ with $n\ge2$.
\end{proof}

\section{Central intervals from diverging slack}\label{sec:central}

We now prove Theorem~\ref{thm:central}. The main observation is that
every missing integer in a central interval lies just below an earlier
prefix total. For sufficiently long prefixes, its distance below that
total belongs to a fixed finite set of globally missing integers.

\subsection{Locating a missing integer}

In the following, we assume that $B$ is a nondecreasing sequence of positive integers. Retain the notation $S_j,P_j,\delta_j$ from Section~\ref{sec:deletion}, and put
$P=\FS(B)$. We assume that $\delta_j\to\infty$. Fix $T\ge1$ with $[T,\infty)\subseteq P$.

\begin{lemma}\label{lem:cut}
If $x\in[T,S_N-T]$ is missing from $P_N$, then
\[
 x=S_j-u\qquad\text{for some }j<N\text{ and }0\le u<T.
\]
\end{lemma}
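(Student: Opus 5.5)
The plan is to argue by induction on $N$, using only the symmetry of $P_N$ about $S_N/2$, monotonicity of $B$, and the hypothesis $[T,\infty)\subseteq P$; the slack hypothesis should not be needed. The key observation is this. If $z\ge T$ and $z<b_k$, then $z\in P$, and every term in a representation of $z$ has value at most $z<b_k$. Since $B$ is nondecreasing, every index $i$ with $b_i<b_k$ satisfies $i<k$. Hence $z\in P_{k-1}$.

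For the base of the induction, note that if $S_N<2T$ the interval $[T,S_N-T]$ is empty and there is nothing to prove. In particular the statement is vacuous for $N=0$. Now let $N\ge1$, and suppose $x\in[T,S_N-T]$ is missing from $P_N$. Since $P_{N-1}\subseteq P_N$, we also have $x\notin P_{N-1}$. I split into three cases according to the position of $x$ relative to $S_{N-1}$.

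\emph{Case $x\le S_{N-1}-T$.} Then $x\in[T,S_{N-1}-T]$ and $x$ is missing from $P_{N-1}$. The induction hypothesis gives $x=S_j-u$ with $j<N-1<N$ and $0\le u<T$.

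\emph{Case $S_{N-1}-T<x\le S_{N-1}$.} Take $j=N-1$ and $u=S_{N-1}-x$. Then $0\le u<T$, and we are done.

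\emph{Case $x>S_{N-1}$.} Put $y=S_N-x$. Then $y\ge T$ because $x\le S_N-T$, and $y<S_N-S_{N-1}=b_N$. By the observation above, $y\in P_{N-1}\subseteq P_N$. By the symmetry of $P_N$, this gives $x=S_N-y\in P_N$, which is a contradiction. So this case cannot occur.

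The step I expected to be the main obstacle is the third case, where $x$ lies above the previous prefix total and the induction hypothesis does not apply directly. The resolution is to pass to the complementary sum $S_N-x$, which is smaller than $b_N$. Any global representation of it can then only use earlier terms, so it is forced into the prefix.
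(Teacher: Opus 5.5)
Your proof is correct and is essentially the paper's argument: your downward induction stops at the least index $i\le N$ with $S_i\ge x+T$, which is exactly the index the paper selects directly. Your third case, where $S_N-x<b_N$ forces a representation into the earlier prefix and complementation then represents $x$, is the same step the paper uses to show $S_i-x\ge b_i$.
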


\begin{proof}
Choose the least $i\le N$ with $S_i\ge x+T$. Then
$y=S_i-x$ satisfies $T\le y<b_i+T$. If $y<b_i$, a global
representation of $y$ uses only terms before index $i$, since every
term at index at least $i$ is at least $b_i$. Its complement in the
first $i$ terms represents $x$, a contradiction. Therefore $y\ge b_i$,
and hence
$x\le S_{i-1}<x+T$. Take $j=i-1$ and $u=S_j-x$.
\end{proof}

A \emph{cut at index $j$} divides the sequence into its first $j$
occurrences and the occurrences after them. We call $j$ the
\emph{cut index} and $S_j$ the \emph{cut total}. Thus
Lemma~\ref{lem:cut} places each missing integer $x$ just below
some cut total $S_j$, with $0\le S_j-x<T$.

Let $H$ be the set of nonnegative integers missing from $P$. This is a
finite subset of $\{1,\ldots,T-1\}$.

We first dispose of bounded sequences. If $B$ is bounded, it is
eventually constant, with value $a$. Let its prefix before this constant
tail have total $C$. A prefix containing $v$ copies from the constant
tail represents every globally represented integer $x\le va$.
Indeed, a global representation has the form $x=u+ra$, with $u$ a sum
from the fixed prefix and $r\ge0$, so $r\le v$. By reflection, the
same prefix represents $x$ whenever $C+va-x$ is globally represented
and at most $va$. On the central interval $[T,C+va-T]$, the first
argument supplies every $x\le va$, and reflection supplies every
$x\ge C$. For $va\ge C$, these ranges cover the central interval.
This proves the theorem in the bounded case.

Henceforth assume that $B$ is unbounded, so $b_j\to\infty$.
Suppose that $x\in[T,S_N-T]$ is missing from $P_N$. Since $x\ge T$,
it has a representation using terms of $B$. Any such representation
must use a term after index $N$, since otherwise $x$ would belong
to $P_N$. Every term after index $N$ is at least $b_{N+1}$,
and all terms are positive, so $x\ge b_{N+1}$.

By Lemma~\ref{lem:cut}, we can write $x=S_j-u$ for some $j<N$
and $0\le u<T$. Thus
\[
 S_j=x+u\ge x\ge b_{N+1}.
\]
Since $b_{N+1}\to\infty$, the corresponding indices $j$ must tend
to infinity whenever such missing integers occur for arbitrarily
large $N$. Indeed, for any fixed index $J'$, sufficiently large $N$
satisfy $b_{N+1}>S_{J'}$, which forces $j>J'$.

For each globally represented integer $w$ with $0\le w<T$,
choose a representation of $w$. Each chosen representation uses
only finitely many terms, and there are only finitely many such
integers $w$. Hence all these representations lie in one fixed
prefix $P_J$. In particular, every globally represented integer
below $T$ belongs to $P_j$ whenever $j\ge J$.

For sufficiently large $N$, the corresponding cut index satisfies
$j\ge J$. Therefore $u$ must belong to $H$: otherwise a
representation of $u$ in the first $j$ terms could be complemented
to represent $S_j-u=x$, contradicting $x\notin P_N$.
If $H$ is empty, this rules out missing integers in
$[T,S_N-T]$ for all sufficiently large $N$, proving the theorem
in that case. We may therefore assume that $H$ is nonempty, and put
\[
 h=\max H,\qquad q=|H|.
\]

\subsection{Families of missing sums}

We call $(j,N,E)$ a \emph{state} if $0\le j<N$, if
$\varnothing\ne E\subseteq H$, and if all the integers in $S_j-E$
are missing from $P_N$. Here $j$ is the cut index, while $N$
specifies the prefix in which these integers are missing.
The elements of $E$ are the state's \emph{labels}: each
$e\in E$ gives a missing integer $S_j-e$, at distance $e$
below the cut total.

The occurrences at indices $j+1,\ldots,N$ are the state's
\emph{available terms}. These lie after the cut but within the
prefix of length $N$. A state is \emph{constant} if all its
available terms have the same value, and is \emph{distinct}
if at least two different values occur among them.

We will apply the constructions below to sequences of states whose
cut indices tend to infinity. Since $b_j\to\infty$ and $S_j\to\infty$,
any fixed lower bounds on term values and prefix totals hold at all
sufficiently large indices. In each construction, we will verify that
the output cut indices also tend to infinity, so that these bounds
remain available at the output cuts.

Every distinct state is regarded as normalized and is left unchanged
by normalization. A constant state with $j=0$ is also normalized.
For a constant state with $j\ge1$, write $a=b_{j+1}$ for its available
value. If $b_j<a$, the state is already normalized. Otherwise $b_j=a$.
Let $p$ be the index immediately before the entire block of occurrences
equal to $a$. Replace the state by $(p,p+1,E)$. This is valid: a
representation of $S_p-e$ in $P_{p+1}$, together with the $j-p$
additional copies of $a$ at positions $p+2,\ldots,j+1$, would represent
$S_j-e$. These are distinct occurrences, and $N\ge j+1$ supplies all
of them. After normalization we have
\[
 j=0\quad\text{or}\quad b_j<b_{j+1}
 \quad\text{in every constant state.}
\]
Normalization preserves the labels. It also preserves divergence of
the cut index: when the original cuts tend to infinity, their available
values tend to infinity, so the first index at which each such value
occurs tends to infinity. Thus all later uses of normalization along
such a sequence of states may assume that the normalized cut is positive.

Since $b_r\to\infty$, we may fix once and for all an index $J_1$,
depending only on $B$ and $T$, such that $b_r\ge T$ for every $r>J_1$.

\begin{lemma}\label{lem:close}
Let $(j,N,E)$ be a state with $j\ge J_1$. Put $a=b_{j+1}$, and let $b$
be the first term value strictly greater than $a$ occurring after
index $j$. Then
\[
 1\le b-a\le\min E\le h.
\]
The threshold $J_1$ depends only on $B$ and $T$, not on the state, and
the term of value $b$ need not belong to the certifying prefix $P_N$.
\end{lemma}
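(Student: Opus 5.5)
The plan is to pick one label $e\in E$, take $e=\min E$, and study the missing integer $x=S_j-e$. Since $e\in H$, we have $1\le e\le h<T$. The inequality $1\le b-a$ holds by definition of $b$, and $\min E\le h$ holds because $E\subseteq H$. The content is therefore $b-a\le e$.

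First I would check that $b$ exists. Since $B$ is unbounded, some term after index $j$ exceeds $a$. Next I would take the global representation of $S_j-e+b-a$, or rather of a suitable nearby integer, and complement it. The natural approach is to consider $y=b-a+\dots$, but a cleaner route is the following. Let $k$ be the index of the first occurrence of $b$, so $k>j+1$ and $b_{j+1}=\dots=b_{k-1}=a$. Then $S_{k-1}=S_j+(k-1-j)a$.

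Suppose for contradiction that $b-a>e$, i.e. $b-a\ge e+1$. I would try to represent $x=S_j-e$ inside $P_{j+1}\subseteq P_N$. The integer $w=a-e$ satisfies $w\ge T-e\ge 1$. Since $j\ge J_1$ gives $a\ge T$, the complementation trick applies: $x=S_{j+1}-(a+e)$, and $a+e\ge T$ is globally represented. If some global representation of $a+e$ used only terms among the first $j+1$, its complement in $P_{j+1}$ would give $x$, a contradiction.

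So every representation of $a+e$ must use a term at index beyond $j+1$. Every such term is either $a$ (a later copy in the block) or at least $b$. If it uses a term of value at least $b$, then $a+e\ge b$, which contradicts $b-a>e$. Hence it uses a later copy of $a$ in the block $j+2,\ldots,k-1$, while the remaining part, summing to $e$, lies in terms other than that copy. Because all copies of $a$ are interchangeable occurrences, I can swap: a representation of $e$ together with one copy of $a$ where some copy sits at an index after $j+1$ can be rearranged. The remainder $e<a$ uses only terms of value less than $a$, hence only indices $\le j$. Swapping the used later copy of $a$ for the copy at index $j+1$, which is unused since the remainder lies in indices $\le j$, gives a representation of $a+e$ within the first $j+1$ terms. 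Complementing yields $x\in P_{j+1}\subseteq P_N$, a contradiction. Thus $b-a\le e=\min E$.

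The main obstacle is this exchange step. We must ensure the remainder uses no other copy of $a$ or larger term; this follows from $e<T\le a$. We also must ensure $N\ge j+1$, which holds since $j<N$.
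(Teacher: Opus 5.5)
Your proof is correct and is essentially the paper's argument. Assuming $b>a+e$, you take a global representation of $a+e\ge T$, note that it can only use values at most $a$, place it inside the first $j+1$ terms, and complement to get $S_j-e\in P_{j+1}\subseteq P_N$. The one difference is that the paper excludes a used copy of $a$ immediately, because the remaining terms would represent the globally missing integer $e\in H$; your exchange step is valid, but the case it handles cannot actually occur.
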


\begin{proof}
The value $b$ exists because the sequence is unbounded, and $b>a$ by
definition, so $b-a\ge1$. Also $\min E\le h$ because $E\subseteq H$.
It remains to prove that $b-a\le e$ for every $e\in E$.

Fix $e\in E$ and suppose that $b>a+e$. Since $j\ge J_1$ we have
$a=b_{j+1}\ge T$, and $e\ge1$, so $a+e\ge T$ and $a+e$ has a global
representation. Because $B$ is nondecreasing with $b_{j+1}=a$, every
term of $B$ has value at most $a$ or at least $b$, and the latter
exceeds $a+e$; hence every term of the representation has value at
most $a$. The representation cannot use an occurrence of $a$, since
removing that occurrence would represent the globally missing integer
$e$. Thus all its terms have value smaller than $a$, and therefore lie
before the cut $j$. Complement this representation in the first $j$
terms and add the occurrence at index $j+1$, which is available
because $N>j$. The resulting sum is $S_j-e$, contrary to the state
condition.
\end{proof}

We will repeatedly subtract an available term from each missing sum.
The resulting sums must be missing from the earlier prefix: adding
back that one occurrence would otherwise give a forbidden
representation. The following observation ensures that the resulting
missing sums again share one cut.

\begin{lemma}\label{lem:common}
Suppose $j$ tends to infinity and $R_j$ is a nonempty set of
integers missing from $P_j$, with
\[
 \min R_j\longrightarrow\infty,\qquad
 \max R_j\le S_j-T,\qquad
 \max R_j-\min R_j\le2h.
\]
Then, for all sufficiently large $j$, there are an index $l<j$
and a nonempty set $E'\subseteq H$ such that $R_j=S_l-E'$.
Moreover, $l\to\infty$.
\end{lemma}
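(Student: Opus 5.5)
Apply Lemma~\ref{lem:cut} to each element of $R_j$, then show that one common cut serves them all.

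\emph{Step 1: one cut per element.} For large $j$ we have $\min R_j\ge T$, so each $x\in R_j$ lies in $[T,S_j-T]$ and is missing from $P_j$. Lemma~\ref{lem:cut} with $N=j$ gives $x=S_{l(x)}-u(x)$ with $l(x)<j$ and $0\le u(x)<T$.

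\emph{Step 2: the offsets lie in $H$.} Repeat the argument preceding the definition of states. Since $x\ge\min R_j\to\infty$ and $S_{l(x)}\ge x$, the cut index $l(x)$ tends to infinity uniformly over $x\in R_j$. So for large $j$ every $l(x)\ge J$, where $J$ is the fixed index with all globally represented integers below $T$ lying in $P_J$. If $u(x)\notin H$, complementing a representation of $u(x)$ within the first $l(x)$ terms would represent $x$ in $P_{l(x)}\subseteq P_j$. This is a contradiction, so $u(x)\in H$.

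\emph{Step 3: a single cut.} Take $x,x'\in R_j$ with cuts $l=l(x)$ and $l'=l(x')$, and suppose $l<l'$. Then
\[
 S_{l'}-S_l=(x'-x)+(u(x')-u(x))\le 2h+h=3h .
\]
But $S_{l'}-S_l\ge b_{l+1}$, and $b_{l+1}\to\infty$ because $l\to\infty$ uniformly. So for large $j$ this is impossible, and all the cuts coincide.

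This still leaves the case where the cut chosen by Lemma~\ref{lem:cut} is not canonical. That lemma picks the least $i$ with $S_i\ge x+T$, and different $x$ could in principle select different $i$. The displayed bound handles this as long as we fix any choice of cut for each $x$: any two choices differ in total by at most $3h$, so they must be equal once $b_{l+1}>3h$. Hence there is one index $l<j$ with $R_j=S_l-E'$, where $E'=\{S_l-x:x\in R_j\}\subseteq H$ is nonempty.

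Finally, $l\to\infty$ because $S_l\ge\min R_j\to\infty$.

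I expect Step 2 to be the main point to handle carefully. The uniformity needed there comes only from the hypothesis $\min R_j\to\infty$ together with $b\to\infty$. The assumption $\max R_j\le S_j-T$ is used only to place each $x$ in the central interval so that Lemma~\ref{lem:cut} applies.
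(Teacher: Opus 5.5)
Your proof is correct and follows the paper's argument essentially step for step. You place each element at a cut via Lemma~\ref{lem:cut}, force the offsets into $H$ by complementing within a cut beyond the fixed index $J$, and rule out two distinct cuts because their totals would differ by at most $3h$ yet by at least $b_{l+1}>3h$. The paper fixes $J$ with $b_{J+1}>3h$ up front, whereas you use the uniform divergence of the cut indices; this is the same thing. Your closing remark about non-canonical cut choices is redundant, since Step~3 already applies to any fixed choice of cuts.
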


\begin{proof}
Choose $J$ such that $P_J$ contains every globally represented
integer below $T$ and $b_{J+1}>3h$. This is possible because
there are only finitely many such integers and $b_r\to\infty$.

Take $j$ sufficiently large that $\min R_j>\max\{T,S_J\}$.
For each $x\in R_j$, Lemma~\ref{lem:cut} gives
$x=S_{l_x}-u_x$ with $l_x<j$ and $0\le u_x<T$.
Since $S_{l_x}=x+u_x>S_J$, we have $l_x>J$.
If $u_x\notin H$, then $u_x\in P_J\subseteq P_{l_x}$.
Complementing a representation of $u_x$ within the first $l_x$
terms would represent $x$, contradicting $x\notin P_j$.
Thus every $u_x$ belongs to $H\subseteq[1,h]$.

For any $x,y\in R_j$, we therefore have
\[
 |S_{l_x}-S_{l_y}|
 \le |x-y|+|u_x-u_y|
 \le 2h+h=3h.
\]
But if $l_x<l_y$, then
\[
 S_{l_y}-S_{l_x}
 =\sum_{r=l_x+1}^{l_y}b_r
 \ge b_{l_x+1}>3h,
\]
a contradiction. Hence all the indices $l_x$ equal one index
$l<j$. Taking $E'=\{u_x:x\in R_j\}$ gives
$R_j=S_l-E'$ with $\varnothing\ne E'\subseteq H$.
Finally, $S_l\ge\min R_j\to\infty$, so $l\to\infty$.
\end{proof}

\subsection{The two transitions}

We describe two transitions between normalized states. Their purpose
is to increase either the number of labels or their least value.

\paragraph{A distinct state increases the number of labels.}
Let $a=b_{j+1}$, and let $b$ be the first larger available value.
Lemma~\ref{lem:close} gives $1\le b-a\le h$. Define
\[
 R_j=\{S_j-a-e:e\in E\}\ \cup\ \{S_j-b-e:e\in E\}.
\]
Every member of $R_j$ is missing from $P_j$. Indeed, a
representation of $S_j-a-e$ in the first $j$ terms could be
extended by the available occurrence of $a$ to represent $S_j-e$
in $P_N$, contrary to the state condition. The same argument
applies to $b$. These added occurrences lie after index $j$,
so no occurrence is reused.

Since $\delta_j=S_j-a$ and $E\subseteq[1,h]$, we have
\[
 \begin{aligned}
  \min R_j
    &=\delta_j-(b-a)-\max E\ge\delta_j-2h,\\
  \max R_j-\min R_j
    &=(b-a)+\max E-\min E\le2h,\\
  \max R_j
    &=S_j-a-\min E\le S_j-T
       \qquad\text{for }a\ge T.
 \end{aligned}
\]
As $j\to\infty$, the first bound tends to infinity and $a\ge T$
eventually. Lemma~\ref{lem:common} therefore gives
$R_j=S_l-E'$ for some $l<j$ and $\varnothing\ne E'\subseteq H$,
with $l\to\infty$.

The first set defining $R_j$ has $|E|$ elements. The second
contains $S_j-b-\max E$, which is strictly smaller than
$S_j-a-\max E$, the smallest element of the first set.
Thus $|R_j|>|E|$. Since $R_j=S_l-E'$ gives one label for
each member of $R_j$, we obtain
\[
 |E'|=|R_j|>|E|.
\]
The new state is $(l,j,E')$. If it is constant, normalize it;
as shown above, normalization preserves its labels and the
fact that its cut index tends to infinity.

\paragraph{A constant state raises the least label or becomes distinct.}
Let $a=b_{j+1}$ be the available value of the normalized constant
state. Thus $a>b_j$. Define
\[
 R_j=\{S_j-a-e:e\in E\}=\{\delta_j-e:e\in E\}.
\]
Every member of $R_j$ is missing from $P_j$: a representation
of $S_j-a-e$ in the first $j$ terms could be extended by the
occurrence at index $j+1$ to represent $S_j-e$ in $P_N$,
contrary to the state condition.

Since $E\subseteq[1,h]$, these numbers have minimum at least
$\delta_j-h\to\infty$, spread at most $h$, and maximum at most
$S_j-T$ once $a\ge T$. Lemma~\ref{lem:common} therefore gives
$R_j=S_l-E'$ with $l<j$, $\varnothing\ne E'\subseteq H$,
and $l\to\infty$. Each description of $R_j$ gives one integer
per label, so $|E'|=|R_j|=|E|$. If the new state $(l,j,E')$
is distinct, the transition is complete.

Otherwise, the terms at indices $l+1,\ldots,j$ all equal
$b=b_j$. Put $r=j-l\ge1$, so $S_j=S_l+rb$. For each $e\in E$,
\[
 S_j-a-e=S_l-(e+a-rb).
\]
Thus the new labels are exactly $e+a-rb$ for $e\in E$, giving
\[
 E'=E+(a-rb),\qquad \min E'=\min E+a-rb.
\]

The first value greater than $b$ after index $l$ is
$a=b_{j+1}$: all the intervening terms equal $b$, and $a>b$.
Since $l\to\infty$, Lemma~\ref{lem:close} applies to the new
state and gives
\[
 a-b\le\min E'=\min E+a-rb.
\]
Rearranging, we obtain
\[
 (r-1)b\le\min E\le h.
\]
For sufficiently large $j$, we have $b=b_j>h$. If $r\ge2$,
then $(r-1)b\ge b>h$, a contradiction. Hence $r=1$, and
\[
 \min E'=\min E+(a-b)>\min E.
\]
Normalize this new constant state if necessary. As shown above,
normalization preserves its labels and the fact that its cut
index tends to infinity.

\subsection{A finite bound on the transitions}

For each normalized state $(j,N,E)$, define
\[
 \Phi=
 \begin{cases}
  (h+1)|E|+\min E,&\text{if the state is constant},\\
  (h+1)(|E|+1),&\text{if the state is distinct}.
 \end{cases}
\]
Since $1\le |E|\le q$ and $1\le\min E\le h$, this is a
positive integer at most $(h+1)(q+1)$.

Let $E'$ and $\Phi'$ denote the label set and potential after
a transition, including any required normalization.
If a constant state becomes constant, then $|E'|=|E|$
and $\min E'>\min E$, so $\Phi'>\Phi$.
If a constant state becomes distinct, then $|E'|=|E|$ and
\[
 \Phi'-\Phi=h+1-\min E\ge1.
\]
Finally, if the original state is distinct, then
$|E'|\ge|E|+1$. Regardless of the new state's type,
\[
 \Phi'\ge(h+1)|E'|+1
 \ge(h+1)(|E|+1)+1
 =\Phi+1.
\]
Thus every transition strictly increases the potential.

Suppose, for a contradiction, that for arbitrarily large $N$
there exists an integer in $[T,S_N-T]$ that is missing from $P_N$.
Lemma~\ref{lem:cut} and the observations following it give
states $(j,N,\{u\})$ with $u\in H$ and cut indices $j$
tending to infinity. Normalization preserves this property.

Since there are only finitely many possible potential values,
at least one is attained by normalized states with arbitrarily
large cut indices. Let $p$ be the largest such value, considering
all normalized states. Choose a sequence of normalized states
with potential $p$ and cut indices tending to infinity.

Apply one transition to each sufficiently late state in this
sequence. The preceding constructions show that the new states
are normalized, their cut indices still tend to infinity, and
their potentials are all greater than $p$. Since only finitely
many potential values are possible, one value $p'>p$ occurs
infinitely often among these new states. Their cut indices tend
to infinity, so $p'$ is attained at arbitrarily large cuts.
This contradicts the choice of $p$.

Therefore
\[
 [T,S_N-T]\subseteq P_N
 \qquad\text{for every sufficiently large }N.
\]
This proves Theorem~\ref{thm:central} and completes the proof
of Theorem~\ref{thm:classification}.

\section{Incomplete exponential sequences}\label{sec:proof}

A \emph{Salem number} is a real algebraic integer greater than one whose
other conjugates lie in the closed unit disk, with at least one on the
unit circle. A \emph{Pisot number} is a real algebraic integer greater
than one whose other conjugates all lie in the open unit disk.

\begin{theorem}\label{thm:main}
Let $\gamma>1$ be the Salem number with minimal polynomial
\[
 P(x)=x^{18}-x^{12}-x^{11}-x^{10}-x^9-x^8-x^7-x^6+1.
\]
Then $6/5<\gamma<13/10<\varphi$, and there are arbitrarily large positive
real numbers $t$ such that $\lfloor t\gamma^n\rfloor$ is even for every
integer $n\ge0$. In particular, these sequences are not complete.
\end{theorem}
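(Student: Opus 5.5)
The proof has three parts: locate $\gamma$ numerically, invoke Dubickas's theorem for this Salem number, and then adjust the sign and scale of the coefficient.

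\textbf{Locating $\gamma$.} I first check $P(6/5)<0$ and $P(13/10)>0$ by direct rational evaluation; a sign change gives a real root in $(6/5,13/10)$. To see that this root is the Salem number $\gamma$, I use that a Salem polynomial has exactly one real root greater than one. Concretely, I write $P(x)=x^{18}-x^6(x^6+\dots+1)+1$ and note that $P(x)/x^{18}$ is increasing on $x>1$ except for the constant term. This shows there is only one root above $1$, so $\gamma$ lies in the interval. The bound $13/10<\varphi$ is immediate.

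\textbf{Dubickas's theorem.} \cite[Theorem~6]{D}, which uses Za\"imi's work for Salem numbers, supplies a nonzero real $\xi$ such that the fractional parts $\{\xi\gamma^n\}$ all lie in a short interval. More precisely, I expect to get that $\xi\gamma^n$ lies within a small distance of an even integer, on one side only, for all $n\ge0$. I will state exactly the form I extract: there are $\xi\ne0$ and $\varepsilon<1$ with $\xi\gamma^n\in 2\Z+[0,\varepsilon)$ for every $n\ge0$. Equivalently, $\lfloor \xi\gamma^n/2\cdot 2\rfloor$ is even. Put more usefully, I apply the theorem with $\xi/2$, so that $\{\xi\gamma^n/2\}\in[0,1/2)$ for all $n$. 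Then $\lfloor \xi\gamma^n\rfloor=2\lfloor\xi\gamma^n/2\rfloor$ is even, which is the doubling step.

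\textbf{Sign adjustment.} This is the main obstacle, since Dubickas does not control the sign of $\xi$. If $\xi<0$, I replace $\xi$ by $-\xi$. Then $\{-x\}=1-\{x\}$ whenever $x\notin\Z$, so fractional parts in $(0,1/2)$ map into $(1/2,1)$. The fix is to choose the interval from Dubickas's theorem in the form $\|\xi\gamma^n/2-c\|$ small around a suitable center, for instance fractional parts in a small interval around $1/4$. Negation sends this to an interval around $3/4$. I then handle that case with the identity $\lfloor -x\rfloor=-\lfloor x\rfloor-1$: it shows that the fractional parts of $-\xi\gamma^n/2$ lie in $[1/2,1)$, so $\lfloor -\xi\gamma^n\rfloor=2\lfloor -\xi\gamma^n/2\rfloor+1$ is odd for every $n$. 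To return to evenness I add a shift: since $\gamma$ is an algebraic integer, I can replace $-\xi$ by a coefficient whose fractional parts are translated by $1/2$ modulo one. I expect to achieve this by adding a suitable element of the form $\frac12\sum c_i\gamma^{-i}$, using the fact that the traces of powers of $\gamma$ are integers. If that trace argument becomes messy, my fallback is to apply the theorem with a symmetric target interval around $1/4$ and $3/4$ handled separately. Whichever way the details go, the output is a positive $t_0$ with every $\lfloor t_0\gamma^n\rfloor$ even for $n\ge0$.

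\textbf{Arbitrarily large $t$ and incompleteness.} Setting $t=t_0\gamma^k$ for $k\ge0$ gives $\lfloor t\gamma^n\rfloor=\lfloor t_0\gamma^{n+k}\rfloor$, which is even for all $n\ge0$, and $t\to\infty$ as $k\to\infty$. Every subset sum of an all-even sequence is even, so no odd integer is represented and the sequence is not complete.
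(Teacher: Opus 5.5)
\textbf{The gap is in the sign adjustment.} Your root-location step and final parity step are essentially fine. One small fix: the $x^{-18}$ term of $P(x)/x^{18}$ is decreasing, so ``increasing except for the constant term'' needs the derivative check $\sum_{k=6}^{12}kx^{18-k}>18$ for $x\ge1$. The paper instead uses that the other conjugates of a Salem number lie in the closed unit disk.

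You flag the sign adjustment as the main obstacle but do not resolve it. Dubickas's theorem does not let you choose a center. It requires $P(1)=-q$ with $q\ge2$ and then places $\{\xi\gamma^n\}$ near $1/q$ for $n\ge1$. Here $P(1)=-5$, which you never compute, so the center is $1/5$ and a center like $1/4$ is not available.

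More seriously, your proposed repair, adding some $\zeta$ that translates all fractional parts by $1/2$, cannot work for this base. Write $P(x)=\sum_kp_kx^k$. Every real $\zeta$ satisfies $\sum_kp_k\zeta\gamma^{n+k}=\zeta\gamma^nP(\gamma)=0$. Suppose $\zeta\gamma^n=m_n+\frac12+d_n$ with $m_n\in\mathbb{Z}$ and $|d_n|<\delta$ for all large $n$. Then $\sum_kp_kd_{n+k}\in-\frac12P(1)+\mathbb{Z}=\frac12+\mathbb{Z}$, while $|\sum_kp_kd_{n+k}|<9\delta$. This is impossible for $\delta\le1/18$. Your fallback with intervals around $1/4$ and $3/4$ names no mechanism that produces a positive coefficient with fractional parts in $[0,\frac12)$.

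\textbf{The missing idea} is to multiply by a negative \emph{integer} that fixes the center modulo one. If $\xi<0$, take $\eta=-(q-1)\xi>0$. Since $-(q-1)/q\equiv1/q\pmod1$, the fractional parts of $\eta\gamma^n$ are again near $1/q$, with the error multiplied by $q-1$. Choosing $\epsilon=1/(4q(q-1))$ in advance gives $\{\eta\gamma^n\}\in(3/(4q),5/(4q))\subset(0,\frac12)$ for $n\ge1$. For $q=5$, the multiplier $-3$, which lands near $2/5$, would also work. Then $\lfloor2\eta\gamma^n\rfloor$ is even, and $t=2\eta\gamma^m$ with $m\ge1$ covers all $n\ge0$ and is arbitrarily large.

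Also note that Dubickas's conclusion holds only for $n\ge1$, so your final shift $t=t_0\gamma^k$ needs $k\ge1$, not $k\ge0$.
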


Dubickas \cite[p.~332]{D} gives this Salem number and its minimal
polynomial. We give the positive-coefficient argument explicitly, then
extend the example to two coefficients in Section~\ref{sec:common-base}.
The statement with $n\ge0$ implies the statement with $n\ge1$, since
every term of either sequence is even.

For a real number $x$, write $\{x\}=x-\lfloor x\rfloor$ for its
fractional part. We use the following result of Dubickas.

\begin{lemma}[Dubickas {\cite[Theorem~6, p.~334]{D}}]\label{lem:dubickas}
Let $\gamma$ be a Pisot or Salem number with minimal polynomial $P$,
and suppose that $P(1)=-q$ for an integer $q\ge2$. For every
$\epsilon>0$, there is a real number $\xi\in\mathbb Q(\gamma)$ such
that
\[
 \frac1q-\epsilon<\{\xi\gamma^n\}<\frac1q+\epsilon
 \qquad(n\ge1).
\]
\end{lemma}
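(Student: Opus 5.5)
The plan is to realize $\xi\gamma^n$ as a trace of an algebraic number minus its non-dominant conjugates. The trace will be congruent to $1/q$ modulo $\Z$, and the conjugate contributions will be made uniformly smaller than $\epsilon$. Let $d=\deg P$ and let $\gamma=\gamma_1,\gamma_2,\ldots,\gamma_d$ be the conjugates.

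\textbf{Step 1: the trace identity.} For a polynomial $Q\in\Z[x]$, consider $\eta_n=Q(\gamma)\gamma^n/\bigl(P'(\gamma)(\gamma-1)\bigr)\in\mathbb Q(\gamma)$. Its trace is $\sum_i Q(\gamma_i)\gamma_i^n/\bigl(P'(\gamma_i)(\gamma_i-1)\bigr)$, which is the sum of the residues of $R(x)=Q(x)x^n/\bigl(P(x)(x-1)\bigr)$ at the roots of $P$. By the residue theorem on a large circle, the sum of all finite residues of $R$ equals minus its residue at infinity. That quantity is a coefficient of the Laurent expansion of $Q(x)x^n/\bigl(P(x)(x-1)\bigr)$ at infinity, and it is an integer because $P$ is monic. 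The residue at $x=1$ is $Q(1)/P(1)=-Q(1)/q$. Hence
\[
 \operatorname{Tr}(\eta_n)\equiv \frac{Q(1)}{q}\pmod{\Z}.
\]
Consequently, if $Q(1)\equiv1\pmod q$, then $\xi=Q(\gamma)/\bigl(P'(\gamma)(\gamma-1)\bigr)$ satisfies
\[
 \xi\gamma^n\equiv \frac1q-\sum_{i\ge2}\frac{Q(\gamma_i)\gamma_i^n}{P'(\gamma_i)(\gamma_i-1)}\pmod{\Z}.
\]
This $\xi$ is real and lies in $\mathbb Q(\gamma)$, and $\gamma_i\ne1$ for every $i$ since $P(1)\ne0$.

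\textbf{Step 2: making the error small.} Every $\gamma_i$ with $i\ge2$ satisfies $|\gamma_i|\le1$. The error is therefore at most $\sum_{i\ge2}|Q(\gamma_i)|\cdot C$, uniformly in $n\ge1$, where $C=\max_{i\ge2}1/|P'(\gamma_i)(\gamma_i-1)|$. It thus suffices to find $Q\in\Z[x]$ with $Q(1)\equiv1\pmod q$ and $|Q(\gamma_i)|<\epsilon/(dC)$ for all $i\ge2$.

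\emph{Pisot case.} Take $Q(x)=x^m$ with $m$ a multiple of the order needed (any $m$ gives $Q(1)=1$). All $|\gamma_i|<1$ for $i\ge2$, so a large $m$ works.

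\emph{Salem case.} Here I would use geometry of numbers. Embed $\Z[\gamma]$ as a lattice of rank $d$ in $\mathbb R\times\mathbb R^{d-2}$ (real $\gamma$, the real conjugate $1/\gamma$, and the complex pairs on the unit circle). Minkowski's theorem, applied to a box that is tiny in all non-dominant coordinates and huge in the $\gamma$-coordinate, gives a nonzero $\beta=Q_0(\gamma)$ with all $|\beta_i|<\epsilon'$ for $i\ge2$. To achieve the congruence, apply Minkowski to the full-rank sublattice $\Lambda=\{Q(\gamma):Q(1)\equiv0\pmod{q^2}\}$; this only changes the covolume by a constant factor. Then take $Q=1+q\,Q_0'$ for suitable elements, or more cleanly use the following device. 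Pick $\beta$ with small conjugates and with $Q_0(1)$ a unit modulo $q$, then set $Q=Q_0^{k}$ with $k$ a multiple of $\varphi(q)$. Powers only shrink the non-dominant conjugates (they are already below $1$) and give $Q(1)\equiv1\pmod q$.

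\textbf{Main obstacle.} The delicate point is arranging $Q_0(1)$ coprime to $q$ while keeping all unit-circle conjugates small. My first attempt is to work in the coset $\{Q: Q(1)\equiv1\pmod q\}$ directly. I would take a small-conjugate element $\mu$ of the sublattice $\{Q(1)\equiv0\pmod q\}$ (from Minkowski) and consider $\beta=\gamma^{m}+\mu$ for a suitable $m$. The issue is that $\gamma^m$ has unit-size circle conjugates, so this needs refinement. Instead, I would use that $N(\gamma-1)=\pm q$ and reduce modulo the ideal $(\gamma-1)$: $Q(1)\bmod q$ is the image of $Q(\gamma)$ in $\Z[\gamma]/(\gamma-1)\cong\Z/q$. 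The requirement is then that $\beta$ avoid the prime ideals above $q$ dividing $(\gamma-1)$. This is a congruence condition of finite index, so Minkowski applied to a translated box (via a pigeonhole argument over the finitely many residue classes of many small-conjugate lattice points) should produce two small-conjugate points whose difference lies in the unit class. This is the step I expect to require the most care, and it is where Za\"imi's input is used in Dubickas's argument.
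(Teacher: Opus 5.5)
The paper does not prove this lemma. It is quoted from Dubickas, who relies on Za\"imi's work for Salem numbers, and the Salem case is the one the paper actually uses. Your argument therefore has to stand on its own.

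\textbf{What is correct.} Step 1 is right: the residue at infinity is an integer because $P(x)(x-1)$ is monic, so $\operatorname{Tr}(\eta_n)\equiv Q(1)/q \pmod{\Z}$. The Pisot case with $Q(x)=x^m$ is also complete. One small slip: the Minkowski embedding of $\Z[\gamma]$ goes into $\mathbb R^2\times\mathbb C^{(d-2)/2}\cong\mathbb R^d$, not $\mathbb R\times\mathbb R^{d-2}$.

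\textbf{The gap.} It is exactly the Salem-case step you flag, and the repair you sketch does not work.
\begin{itemize}
\item Minkowski's theorem produces nonzero lattice points in a symmetric box centred at the origin. It says nothing about a translated box or a prescribed coset.
\item Pigeonhole over the $q$ classes of $\Z[\gamma]/(\gamma-1)\cong\Z/q$ gives two small-conjugate points in the \emph{same} class. Their difference therefore lies in class $0$, not class $1$.
\item Nothing you wrote rules out that every small-conjugate element you construct lies in $(\gamma-1)$. In that case $Q_0(1)\equiv 0 \pmod q$, and the power trick $Q=Q_0^k$ gives nothing.
\item The variant $Q=1+qQ_0'$ with $Q_0'(\gamma)$ having small conjugates also fails. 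Then $Q(\gamma_i)$ is close to $1$, not to $0$.
\end{itemize}

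\textbf{The missing idea.} You need that the projection of $\Z[\gamma]$ onto the non-dominant coordinates $\mathbb R^{d-1}$ is dense. This follows from what you already have.
\begin{itemize}
\item Minkowski gives $\lambda\in\Z[\gamma]\setminus\{0\}$ with $|\lambda_i|<\epsilon'$ for all $i\ge2$. All $\lambda_i$ are nonzero.
\item Fix $\mu_1,\ldots,\mu_{d-1}\in\Z[\gamma]$ whose projections form an $\mathbb R$-basis of $\mathbb R^{d-1}$. This is possible because $\Z[\gamma]$ is a full lattice in $\mathbb R^d$.
\item Multiplication by $\lambda$ acts on the non-dominant coordinates as an invertible linear map of norm at most $\epsilon'$. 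So the projections of $\lambda\mu_1,\ldots,\lambda\mu_{d-1}$ generate a lattice with covering radius $O(\epsilon')$. This lattice is contained in the projection of $\Z[\gamma]$, which gives density.
\item Now choose $\nu=N(\gamma)$, with $N\in\Z[x]$, such that $|\nu_i+1/q|<\delta/q$ for all $i\ge2$. Put $Q=1+qN$.
\item Then $Q(1)\equiv1\pmod q$ and $|Q(\gamma_i)|=q|\nu_i+1/q|<\delta$ for $i\ge2$.
\end{itemize}
Your Steps 1 and 2 then finish the proof. You should also take $\epsilon\le 1/q$, so that $1/q$ minus the error really is the fractional part.
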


The coefficient in Lemma~\ref{lem:dubickas} may have either sign. The
next proposition records the elementary sign adjustment that we need.

\begin{proposition}\label{prop:positive}
Let $\gamma$ be a Pisot or Salem number with minimal polynomial $P$,
and suppose that $P(1)=-q$ for an integer $q\ge3$. There is a real
number $\eta>0$ such that
\[
 \frac3{4q}<\{\eta\gamma^n\}<\frac5{4q}
 \qquad(n\ge1).
\]
Consequently, there are arbitrarily large positive real numbers $t$
for which $\lfloor t\gamma^n\rfloor$ is even for every integer $n\ge0$.
\end{proposition}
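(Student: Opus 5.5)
The plan is to take the coefficient $\xi$ from Lemma~\ref{lem:dubickas} with a suitably small $\epsilon$, and to correct its sign, if necessary, by multiplying by a negative integer.

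First, apply Lemma~\ref{lem:dubickas} with $\epsilon=1/(4q(q-1))$. This gives $\xi\in\mathbb Q(\gamma)$ with $|\{\xi\gamma^n\}-1/q|<\epsilon$ for all $n\ge1$. Note that $\xi\ne0$, since otherwise every fractional part would be $0$, which is not within $\epsilon<1/q$ of $1/q$. If $\xi>0$, put $\eta=\xi$. The required bounds hold because $\epsilon\le 1/(4q)$.

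If $\xi<0$, put $\eta=-(q-1)\xi>0$ and reduce modulo one. For each $n\ge1$, write $\xi\gamma^n=m_n+1/q+\theta_n$ with $m_n\in\Z$ and $|\theta_n|<\epsilon$. Then
\[
 \eta\gamma^n=-(q-1)m_n-\frac{q-1}{q}-(q-1)\theta_n
 =-(q-1)m_n-1+\frac1q-(q-1)\theta_n .
\]
The first two terms are integers, and $|(q-1)\theta_n|<(q-1)\epsilon=1/(4q)$. So $1/q-(q-1)\theta_n$ lies in $(3/(4q),5/(4q))\subset(0,1)$, and it is therefore exactly $\{\eta\gamma^n\}$. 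This is the sign adjustment. The only delicate point is choosing $\epsilon$ small enough to absorb the factor $q-1$, so that the fractional part does not wrap around.

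For the consequence, fix an integer $k\ge1$ and set $t=2\eta\gamma^k$. This is positive and tends to infinity with $k$, because $\gamma>1$. For $n\ge0$ we have $n+k\ge1$, so we may write $\eta\gamma^{n+k}=M+f$ with $M\in\Z$ and $f\in(3/(4q),5/(4q))$. Then $t\gamma^n=2M+2f$, and
\[
 0<2f<\frac{5}{2q}\le\frac56<1
\]
because $q\ge3$. Hence $\lfloor t\gamma^n\rfloor=2M$ is even for every $n\ge0$. The hypothesis $q\ge3$ is used exactly here, to keep $2f$ below one.
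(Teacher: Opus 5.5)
Your proposal is correct and matches the paper's proof essentially step for step: the same $\epsilon=1/(4q(q-1))$, the same sign adjustment $\eta=-(q-1)\xi$ via $-(q-1)/q=-1+1/q$, and the same coefficients $t=2\eta\gamma^k$ obtained by doubling and shifting the exponent. The only cosmetic difference is that you bound $2f<5/(2q)\le5/6$ directly, while the paper notes $(3/(4q),5/(4q))\subset(0,1/2)$ and writes $\lfloor2\eta\gamma^n\rfloor=2\lfloor\eta\gamma^n\rfloor$; both arguments use $q\ge3$ at the same point.
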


\begin{proof}
Set $\epsilon=1/(4q(q-1))<1/q$ and choose $\xi$ by
Lemma~\ref{lem:dubickas}. Note that $\epsilon\le1/(4q)$ and
$(q-1)\epsilon=1/(4q)$. The lower bound in the lemma is positive,
so $\xi\ne0$. For each $n\ge1$, set
$a_n=\lfloor\xi\gamma^n\rfloor$ and
$e_n=\{\xi\gamma^n\}-1/q$. Then
\[
 \xi\gamma^n=a_n+\frac1q+e_n,
 \qquad a_n\in\mathbb Z,\quad |e_n|<\epsilon
 \qquad(n\ge1).
\]
Since $q\ge3$, the interval $(3/(4q),5/(4q))$ lies in $(0,1/2)$.
If $\xi>0$, set $\eta=\xi$; the residual after the integer $a_n$ is
$1/q+e_n$, which differs from $1/q$ by less than $1/(4q)$.
If $\xi<0$, set $\eta=-(q-1)\xi>0$. Then
\[
 \eta\gamma^n=-(q-1)a_n-1+\frac1q-(q-1)e_n.
\]
Here $-(q-1)a_n-1$ is an integer, and the residual
$1/q-(q-1)e_n$ differs from $1/q$ by less than $1/(4q)$.
Thus, in either case, $\eta\gamma^n$ is an integer plus a residual
in $(3/(4q),5/(4q))\subset(0,1/2)$, so the residual is its fractional
part. In either case $\eta>0$, and
\[
 \frac3{4q}<\{\eta\gamma^n\}<\frac5{4q}<\frac12
 \qquad(n\ge1).
\]
It follows that
$\lfloor2\eta\gamma^n\rfloor=2\lfloor\eta\gamma^n\rfloor$ is
even. For any integer $m\ge1$, set $t_m=2\eta\gamma^m$. Then
$\lfloor t_m\gamma^n\rfloor$ is even for every $n\ge0$, and
$t_m\to\infty$ as $m\to\infty$.
\end{proof}

\begin{proof}[Proof of Theorem~\ref{thm:main}]
Dubickas \cite[p.~332]{D} identifies the displayed polynomial as the
minimal polynomial of a Salem number $\gamma$. Since $P(1)=-5$,
Proposition~\ref{prop:positive} gives the required values of $t$.
For an exact check of the interval, direct evaluation gives
\[
 \begin{aligned}
 5^{18}P(6/5)&=-41745565065959<0,\\
 10^{18}P(13/10)&=28586401421206393129>0.
 \end{aligned}
\]
The intermediate value theorem gives a root in $(6/5,13/10)$, which
has modulus greater than one. Since $\gamma$ is the only root outside
the closed unit disk, that root is $\gamma$. Thus
$6/5<\gamma<13/10<3/2<\varphi$, where $3/2<\varphi$ since
$\sqrt5>2$. The floors are
nonnegative since $t>0$ and $\gamma>0$. Every finite sum of terms of
the resulting sequence is even, so the sequence is not
complete.
\end{proof}

Van Doorn \cite[Proposition~8]{VD} proved by a computer-assisted argument
that the sequence is complete
for $1.2<\gamma\le1.3$ and $0<t\le5$. Since the Salem number above
lies in this interval, every counterexample coefficient for this base
must exceed 5. Van Doorn indexes the sequence by $n\ge1$; adjoining
the term with $n=0$ preserves completeness. Our argument is existential:
it gives neither an explicit
coefficient nor a numerical upper bound for one.

\section{Two sequences with a common base}\label{sec:common-base}

Erd\H{o}s and Graham \cite[p.~58]{EG} ask whether the interleaving of
$(\lfloor\alpha2^n\rfloor)_{n\ge0}$ and
$(\lfloor\beta2^n\rfloor)_{n\ge0}$ is complete whenever
$\alpha,\beta>0$ and $\alpha/\beta$ is irrational. They then ask:
``What if 2 is replaced by $\gamma$ where $1<\gamma<2$?'' We retain
separate occurrences of repeated values, as in this interleaving.

Our example gives an incomplete interleaving for some such bases.
The ordinary set union is also incomplete, since discarding repeated
occurrences cannot create a representation. For a set, completeness
means that every sufficiently large integer is a sum of distinct elements.

For base 2, Hegyv\'ari \cite{H} proved incompleteness when
$\alpha\ge2$ and $\beta=2^k\alpha$ for a positive integer $k$.
Jiang and Ma \cite{JM} studied the lower coefficient range
$1<\alpha<2$ in this base-2 setting; see also the description in
Fang and He \cite{FH}. Fang and He further studied
the excluded sums when the coefficient ratio is an integral power of 2.

\begin{corollary}\label{cor:two-sequences}
There are $1<\gamma<\varphi$ and positive real numbers
$\alpha,\beta$ such that every term
of both $(\lfloor\alpha\gamma^n\rfloor)_{n\ge0}$ and
$(\lfloor\beta\gamma^n\rfloor)_{n\ge0}$ is even, and
\[
 \frac\beta\alpha\ne r\gamma^k
 \qquad(r\in\mathbb Q,\ k\in\mathbb Z).
\]
In particular, $\alpha/\beta$ is irrational, neither sequence is a
tail of the other, and their interleaving is not complete.
\end{corollary}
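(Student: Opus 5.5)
Take $\gamma$ to be the Salem number of Theorem~\ref{thm:main}, so $1<\gamma<\varphi$ and $P(1)=-5$. We need two positive coefficients whose floors are always even and whose ratio avoids every $r\gamma^k$.

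\textbf{The set of good coefficients.} Proposition~\ref{prop:positive} gives $\eta>0$ with $\{\eta\gamma^n\}\in(3/20,5/20)$ for all $n\ge1$. Then every term of $(\lfloor 2\eta\gamma^{m}\gamma^n\rfloor)_{n\ge0}$ is even for each $m\ge1$. The coefficients $2\eta\gamma^m$ alone are useless, because their ratios are powers of $\gamma$. I therefore need a second independent coefficient. Lemma~\ref{lem:dubickas} supplies a whole family: for each $\epsilon>0$ it gives some $\xi\in\mathbb Q(\gamma)$, and the sign adjustment turns it into some $\eta$ with the same fractional-part bounds.

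\textbf{Cheapest route: rational perturbation.} The coefficient is any real $\eta'>0$ with $\{\eta'\gamma^n\}\in(0,1/2)$ for all $n\ge1$, and $\mathbb Q(\gamma)$ has degree $18$. Try $\eta'=\eta+\mu$ with $\mu\in\mathbb Q(\gamma)$ chosen so that $\mu\gamma^n$ is always within $1/20$ of an integer; the parity window would then survive. Small Pisot–Salem style elements do this: if $\mu\in\mathbb Z[\gamma]$ has tiny conjugates, then $\mathrm{Tr}(\mu\gamma^n)\in\mathbb Z$. For a Salem number, however, the conjugates on the unit circle contribute bounded but non-decaying oscillation. So I would instead take $\mu=\lambda\in\mathbb Z[\gamma]$ itself. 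Then $\lambda\gamma^n$ is an algebraic integer but not a rational integer, so this only works if $\lambda$ is handled through its trace, not through exact integrality. That is messy.

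\textbf{Simpler route: integer multiples.} With $\epsilon$ tiny, multiply the coefficient by an integer $c$ with $c\equiv1\pmod 5$. Then $c\xi\gamma^n=c a_n+c/5+ce_n$, and $c/5$ has fractional part $1/5$. Hence $\{c\xi\gamma^n\}\approx1/5$, provided $\epsilon$ is chosen with $c\epsilon<1/20$. Taking $c=6$ and $\epsilon$ small, both $\xi$ and $6\xi$ work, but their ratio $6$ is rational. So scaling fails, and I genuinely need two different $\xi$'s.

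\textbf{Main step: a counting argument.} Let $\xi_1$ be the output of Lemma~\ref{lem:dubickas} for $\epsilon_1$, and $\xi_2$ for $\epsilon_2$; after sign adjustment they become $\eta_1,\eta_2$. Set $\alpha=2\eta_1$. Two key facts drive the argument:
\begin{itemize}
\item The set of admissible $\beta$ (positive $\beta$ with $\{\beta\gamma^n/2\}<1/2$ for all $n$) is closed under $\beta\mapsto\beta\gamma^m$ and under $\beta\mapsto(1+5k)\beta$ when $\epsilon$ is small relative to $k$.
\item It also contains sums: if $\eta_1,\eta_2$ both have fractional parts near $1/5$, then $\eta_1+\eta_2$ has fractional parts near $2/5<1/2$.
\end{itemize}
So take $\beta=2(\eta_1+\gamma^s\eta_2')$, where $\eta_2'$ is another admissible coefficient. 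More simply, take $\beta=2(\eta_1+\eta_1\gamma)$: its fractional parts are near $2/5$, so its floors are even. The ratio is $\beta/\alpha=1+\gamma$. If $1+\gamma=r\gamma^k$ with $r\in\mathbb Q$, then $\gamma$ would satisfy a polynomial of degree at most $|k|+1$. The only way this can happen in degree $18$ is if that polynomial is a rational multiple of $P$, and the shape $r x^k - x - 1$ (or its analogue for negative $k$, after clearing powers) has only three terms. $P$ has nine terms, so this is impossible. I expect this algebraic independence check to be the only real obstacle; the parity bounds need $\{\eta_1\gamma^n\}+\{\eta_1\gamma^{n+1}\}<1/2$, which holds since each is below $1/4$.

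\textbf{The stated consequences.} All terms of both sequences are even, so their interleaving and their union are incomplete. The ratio $\beta/\alpha$ is irrational, since otherwise it would equal $r\gamma^0$. Neither sequence is a tail of the other, since that would give $\beta/\alpha=\gamma^k$ up to floors. This last point needs care because equal floors do not force equal coefficients; it follows from the exponential growth of the terms.
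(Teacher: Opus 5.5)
\textbf{Gap in the algebraic step.} Your construction $\beta=\alpha(1+\gamma)$, with parity from $\{\eta\gamma^{j}\}+\{\eta\gamma^{j+1}\}<1/2$, is the paper's. However, the step you flag as the real obstacle is not proved.

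From $1+\gamma=r\gamma^k$ you get a nonzero rational polynomial $Q$ with $Q(\gamma)=0$. It is $rx^k-x-1$ if $k\ge2$, or $x^{|k|+1}+x^{|k|}-r$ if $k<0$. Minimality of $P$ gives only $P\mid Q$, not that $Q$ is a scalar multiple of $P$. Your degree and term-count argument therefore settles only $\deg Q\le 18$.

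Once $\deg Q>18$, which covers infinitely many $k$, $Q$ could a priori be $P$ times a nontrivial cofactor. The number of nonzero coefficients says nothing about that: $x^n-1$ has two terms yet is divisible by $x^{n-1}+\cdots+x+1$, which has $n$.

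The missing idea is to use another conjugate. $P$ is reciprocal, $x^{18}P(1/x)=P(x)$, so $\gamma^{-1}$ is a root of $P$. Hence there is a field embedding of $\mathbb Q(\gamma)$ sending $\gamma\mapsto\gamma^{-1}$. Applying it gives $1+\gamma^{-1}=r\gamma^{-k}$. Dividing the two identities gives $\gamma=\gamma^{2k}$, which is impossible since $\gamma>1$ and $2k\ne1$. This is the paper's argument. Equivalently, multiplying the two identities gives $\gamma+\gamma^{-1}=r^2-2\in\mathbb Q$, contradicting $\deg\gamma=18$.

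\textbf{Indexing slip at $n=0$.} Proposition~\ref{prop:positive} controls $\{\eta\gamma^n\}$ only for $n\ge1$. With $\alpha=2\eta_1$, the $n=0$ terms $\lfloor 2\eta_1\rfloor$ and $\lfloor 2\eta_1(1+\gamma)\rfloor$ have uncontrolled parity. Your check $\{\eta_1\gamma^n\}+\{\eta_1\gamma^{n+1}\}<1/2$ at $n=0$ needs $\{\eta_1\}<1/4$, which you do not have.

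Apply the shift you noted earlier and take $\alpha=2\eta_1\gamma$, as the paper does. Then both sequences only involve $\{\eta_1\gamma^{j}\}$ with $j\ge1$.

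\textbf{The tail claim.} It is fine once made precise. If, say, $\lfloor\beta\gamma^n\rfloor=\lfloor\alpha\gamma^{n+k}\rfloor$ for all $n\ge0$, then $|\beta-\alpha\gamma^{k}|\gamma^n<1$ for all $n$. Letting $n\to\infty$ forces $\beta/\alpha=\gamma^{k}$; the symmetric case gives $\gamma^{-k}$. Both are excluded by the repaired algebraic step.
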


\begin{proof}
Let $\gamma$ be the Salem number from Theorem~\ref{thm:main}. Choose
$\eta>0$ by Proposition~\ref{prop:positive} with
$q=5$, so that
\[
 \frac3{20}<\{\eta\gamma^j\}<\frac14
 \qquad(j\ge1).
\]
Set $\alpha=2\eta\gamma$ and $\beta=\alpha(1+\gamma)$. For
$n\ge0$, the number $\alpha\gamma^n=2\eta\gamma^{n+1}$ has even
integer part. Moreover,
\[
 \frac3{10}<
 \{\eta\gamma^{n+1}\}+\{\eta\gamma^{n+2}\}<\frac12.
\]
Writing each summand $\eta\gamma^{n+1}$ and $\eta\gamma^{n+2}$ as
its integer part plus its fractional part shows that
$\beta\gamma^n=2(\eta\gamma^{n+1}+\eta\gamma^{n+2})$ also has even
integer part.

The polynomial $P$ in Theorem~\ref{thm:main} is reciprocal, since
$x^{18}P(1/x)=P(x)$, so $\gamma^{-1}$ is also a root of $P$.
Since $P$ is minimal, the map $\gamma\mapsto\gamma^{-1}$ extends
to an isomorphism from $\mathbb Q(\gamma)$ onto
$\mathbb Q(\gamma^{-1})$. These fields are equal, since each of
$\gamma$ and $\gamma^{-1}$ is the inverse of the other. Thus the
isomorphism is an automorphism of $\mathbb Q(\gamma)$. If
$\beta/\alpha=1+\gamma=r\gamma^k$ for some $r\in\mathbb Q$ and
$k\in\mathbb Z$, then $r\ne0$, and applying this automorphism gives
$1+\gamma^{-1}=r\gamma^{-k}$. Dividing the two identities yields
$\gamma=\gamma^{2k}$. Since $\gamma>1$, this forces $2k=1$, a
contradiction.

Taking $k=0$ shows that $\beta/\alpha$, and hence $\alpha/\beta$,
is irrational. If one sequence were a tail of the other, then for
some $k\ge0$ either
$|\beta-\alpha\gamma^k|\gamma^n<1$ for every $n\ge0$ or
$|\alpha-\beta\gamma^k|\gamma^n<1$ for every $n\ge0$. Letting
$n\to\infty$ would give $\beta/\alpha=\gamma^k$ or
$\beta/\alpha=\gamma^{-k}$, both excluded above. All finite sums
from the interleaving are even, so the interleaving is not complete.
\end{proof}

This corollary does not resolve the original question with base 2.
For the constructed base, incompleteness holds even with separate
occurrences retained.
The ordinary set union is also incomplete, since every available value
and therefore every finite sum is even.

\section*{Declaration of generative AI and AI-assisted technologies}
The proofs were found with the assistance of
Codex with GPT-6 Astra Ultra. The author directed separate writing and auditing teams,
read and edited the original proofs, and requested further revisions
for clarity. The author takes responsibility for the
content of the article.

\end{document}